\documentclass{amsart}
\usepackage[top=1in,bottom=1in,left=1.1in,right=1.1in]{geometry}
\usepackage{graphicx}
\usepackage{float}
\usepackage{amsthm}
\usepackage{amsmath}
\usepackage{amssymb}
\usepackage{mathrsfs}
\usepackage{tikz-cd}
\tikzcdset{scale cd/.style={every label/.append style={scale=#1},
    cells={nodes={scale=#1}}}}
\usepackage{adjustbox}
\usepackage{relsize}
\usepackage{stmaryrd}
\usepackage{enumitem}
\usepackage{placeins}
\usepackage{pdflscape}
\usepackage[colorlinks=true,citecolor=blue]{hyperref}
\usepackage{appendix}

\usepackage{comment}

\theoremstyle{definition}
\newtheorem{Definition}{Definition}[section]

\theoremstyle{plain}
\newtheorem{Theorem}[Definition]{Theorem}

\theoremstyle{plain}
\newtheorem{Proposition}[Definition]{Proposition}

\theoremstyle{definition}
\newtheorem{Notation}[Definition]{Notation}

\theoremstyle{plain}
\newtheorem{Lemma}[Definition]{Lemma}

\theoremstyle{plain}

\theoremstyle{plain}

\theoremstyle{plain}
\newtheorem{Construction}[Definition]{Construction}

\theoremstyle{definition}
\newtheorem{Example}[Definition]{Example}

\theoremstyle{remark}
\newtheorem{Remark}[Definition]{Remark}

\theoremstyle{plain}
\newcommand{\thistheoremname}{}
\newtheorem*{generic*}{\thistheoremname}
\newenvironment{namedthm*}[1]
  {\renewcommand{\thistheoremname}{#1}%
   \begin{generic*}}
  {\end{generic*}}

\newcommand{\stringdiagramfigurescale}{0.62}
\newcommand{\stringdiagramfigure}[1]{\adjustbox{valign=c}{\includegraphics[scale=\stringdiagramfigurescale]{#1}}}

\title{Frobenius Algebras and Dual Bimodules in Monoidal 2-Categories}
\author{Hao Xu}
\date{\today}

\begin{document}

\bibliographystyle{alpha}

\begin{abstract}
    We explicitly construct dual bimodules in a semistrict monoidal 2-category, using Frobenius algebra structure. The main result shows that a coherent dual of the underlying object can be promoted to a coherent dual of the bimodule, with zigzag 2-isomorphisms additionally require special Frobenius structures. We also prove that every special Frobenius algebra in $\mathbf{2Vect}$ is rigid, via a categorified Casimir object argument, and discuss the relationship between the Frobenius, rigid, special Frobenius, and separable algebra hierarchies.
\end{abstract}

\maketitle

{\hypersetup{linkcolor=black}\tableofcontents}

\section{Introduction}

Dualizable objects and morphisms play a central role in higher category theory, with applications to topological field theory, representation theory, and the classification of fusion 2-categories. A classical result in monoidal 1-categories states that (i) if bimodules over Frobenius algebras are dualizable, then the forgetful functor reflects dualizability; (ii) if the underlying objects are dualizable, then the duality data can be lifted to the bimodules if the Frobenius algebra is special. The purpose of this paper is to establish the analogous result in monoidal 2-categories.

We work in a semistrict monoidal 2-category $\mathfrak{C}$ and consider $(A,B)$-bimodules $M$ where $A$ and $B$ are Frobenius algebras. Our main result provides an explicit construction of a coherent right dual for $M$: given a coherent right dual $M^\lor$ of the underlying object in $\mathfrak{C}$, we equip $M^\lor$ with a $(B,A)$-bimodule structure and construct the unit and counit 1-morphisms and their zigzag 2-isomorphisms. The bimodule structure and unit/counit only require Frobenius structure on $A$; the zigzag 2-isomorphisms additionally require the special Frobenius section $\gamma^A$. Similarly, given a coherent left dual ${}^\lor M$ of the underlying object, the bimodule structure and unit/counit only require Frobenius structure on $B$, while the zigzag 2-isomorphisms require the special Frobenius section $\gamma^B$.

This construction is more general than Décoppet's proof in the literature via 2-adjunctions \cite{D8}, which guarantees dualizability of bimodules over separable algebras in fusion 2-categories but without explicit formulas. We work in a general semistrict monoidal 2-category and need only Frobenius and special Frobenius structures rather than full separability.

We also prove that every special Frobenius algebra in $\mathbf{2Vect}$ is rigid (Proposition~\ref{prop:Frobenius_algebra_in_2Vect_is_rigid}), via a categorified Casimir object argument, and raise the open question of whether this holds in a general fusion 2-category.

\subsection*{Acknowledgements}
    The author was supported by DAAD Graduate School Scholarship Programme (57572629), DFG Project 398436923 RTG 2491 ``\textit{Fourier Analysis and Spectral Theory}'' and Villum Fonden 00060714 ``\textit{Global Categorical Symmetries and Phases of Quantum Matter}''. The author would like to thank Thibault Décoppet and Markus Zetto for helpful discussions.
\section{Preliminaries}

\subsection{Graphical Calculus}

We use the string diagram calculus for semistrict monoidal 2-categories throughout. Objects correspond to regions, 1-morphisms to strings, and 2-morphisms to coupons (vertices). Horizontal composition corresponds to horizontal juxtaposition of string diagrams \emph{from left to right}, and vertical composition to stacking \emph{from top to bottom}. We follow the conventions of \cite{DX,D8}; the reader unfamiliar with string diagrams for 2-categories may consult these references for the relevant notation and coherence results.

\subsection{Coherent Duals}

Suppose $(\mathfrak{C},\Box,\mathbf{I})$ is a semistrict monoidal 2-category.

\begin{Definition} \label{def:CoherentDual}
    A coherent right dual of an object $y$ in $\mathfrak{C}$ consists of:
    \begin{enumerate}
        \item An object $y^\lor$ in $\mathfrak{C}$;
        
        \item A unit 1-morphism $c:\mathbf{I} \to y^\lor \, \Box \, y$ in $\mathfrak{C}$;
        
        \item A counit 1-morphism $e:y \, \Box \, y^\lor \to \mathbf{I}$ in $\mathfrak{C}$;
        
        \item Two 2-isomorphisms \[\begin{tikzcd}
            {y^\lor}
                \arrow[r,"c 1"]
                \arrow[d,equal]
            & { y^\lor \, \Box \, y \, \Box \, y^\lor}
                \arrow[d,"1 e"]
                \arrow[dl,Rightarrow,shorten <= 10pt,shorten >= 10pt,"E"]
            \\ {y^\lor}
                \arrow[r,equal]
            & {y^\lor}
        \end{tikzcd}, \begin{tikzcd}
            {y}
                \arrow[r,"1 c"]
                \arrow[d,equal]
            & { y \, \Box \, y^\lor \, \Box \, y}
                \arrow[d,"e 1"]
                \arrow[dl,Rightarrow,shorten <= 10pt,shorten >= 10pt,"F"]
            \\ {y}
                \arrow[r,equal]
            & {y}
        \end{tikzcd}, \]
    \end{enumerate} satisfying

    \begin{enumerate}
        \item [a.] We have the coherence equation
    \end{enumerate}

    \begin{equation} \label{eqn:CoherentDual1}
        \begin{tabular}{@{}ccc@{}}
        
        \stringdiagramfigure{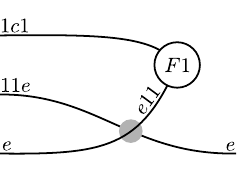} & \adjustbox{valign=c}{$=$} &
        \stringdiagramfigure{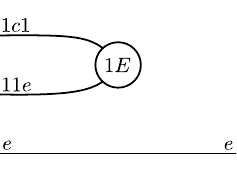}
        
        \end{tabular}
    \end{equation}

    \begin{enumerate}
        \item [] in $\mathbf{Hom}_\mathfrak{C}({}^\lor x \, \Box \, x,\mathbf{I})$,
        
        \item [b.] We have the coherence equation
    \end{enumerate}

    \begin{equation} \label{eqn:CoherentDual2}
        \begin{tabular}{@{}ccc@{}}
        
        \stringdiagramfigure{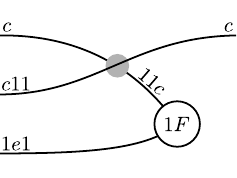} & \adjustbox{valign=c}{$=$} &
        \stringdiagramfigure{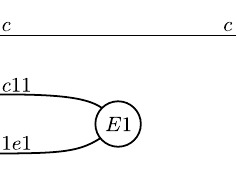}
        
        \end{tabular}
    \end{equation}

    \begin{enumerate}
        \item [] in $\mathbf{Hom}_\mathfrak{C}(\mathbf{I},x \, \Box \, {}^\lor x)$.
    \end{enumerate}

    Dually, a coherent left dual of an object $x$ in $\mathfrak{C}$ consists of:
    \begin{enumerate}
        \item An object ${}^\lor x$ in $\mathfrak{C}$;
        
        \item A unit 1-morphism $c:\mathbf{I} \to x \, \Box \, {}^\lor x$ in $\mathfrak{C}$;
        
        \item A counit 1-morphism $e:{}^\lor x \, \Box \, x \to \mathbf{I}$ in $\mathfrak{C}$;
        
        \item Two 2-isomorphisms \[\begin{tikzcd}
            {x}
                \arrow[r,"c 1"]
                \arrow[d,equal]
            & { x \, \Box \, {}^\lor x \, \Box \, x}
                \arrow[d,"1 e"]
                \arrow[dl,Rightarrow,shorten <= 10pt,shorten >= 10pt,"E"]
            \\ {x}
                \arrow[r,equal]
            & {x}
        \end{tikzcd}, \begin{tikzcd}
            {{}^\lor x}
                \arrow[r,"1 c"]
                \arrow[d,equal]
            & { {}^\lor x \, \Box \, x \, \Box \, {}^\lor x}
                \arrow[d,"e 1"]
                \arrow[dl,Rightarrow,shorten <= 10pt,shorten >= 10pt,"F"]
            \\ {{}^\lor x}
                \arrow[r,equal]
            & {{}^\lor x}
        \end{tikzcd},\]
    \end{enumerate} satisfying the same two coherence equations as above.
\end{Definition}

\subsection{Algebras} \label{sec:Algebras}

Suppose $(\mathfrak{C},\Box,\mathbf{I})$ is a semistrict monoidal 2-category.

\begin{Definition} \label{def:Algebra}
    An algebra in $\mathfrak{C}$ consists of:
    \begin{enumerate}
        \item An object $A$ in $\mathfrak{C}$;
        
        \item Two 1-morphisms, multiplication $m: A \, \Box \, A \to A$ and unit $i: \mathbf{I} \to A$;
        
        \item Three 2-isomorphisms, associator $\alpha$, left unitor $\lambda$ and right unitor $\rho$: \[\begin{tikzcd}
            {A \, \Box \, A \, \Box \, A} 
                \arrow[r, "m 1"]
                \arrow[d,"1 m"']
            & {A \, \Box \, A}
                \arrow[d, "m"]
                \arrow[ld,Rightarrow,shorten <= 10pt,shorten >= 10pt,"\alpha"]
            \\ {A \, \Box \, A}
                \arrow[r, "m"']
            & {A}
        \end{tikzcd}, \begin{tikzcd}
            {\mathbf{I} \, \Box \, A} 
                \arrow[r, "i 1"]
                \arrow[d,equal]
            & {A \, \Box \, A}
                \arrow[d, "m"]
                \arrow[dl,Rightarrow,shorten <= 10pt,shorten >= 10pt,"\lambda"']
                \arrow[dr,Rightarrow,shorten <= 10pt,shorten >= 10pt,"\rho"]
            & {A \, \Box \, \mathbf{I}} 
                \arrow[d,equal]
                \arrow[l,"1 i"']
            \\ {A}
                \arrow[r, equal]
            & {A}
            & {A}
                \arrow[l,equal]
        \end{tikzcd},\]
    \end{enumerate} subject to the following coherence conditions:
    
    \begin{enumerate}
        \item [a.] We have the pentagon equation in $\mathbf{Hom}_\mathfrak{C}(A \, \Box \, A \, \Box \, A \, \Box \, A,A)$:
    \end{enumerate}

    \begin{equation} \label{eqn:AlgebraAssociativity}
        \begin{tabular}{@{}cccc@{}}
        
        \stringdiagramfigure{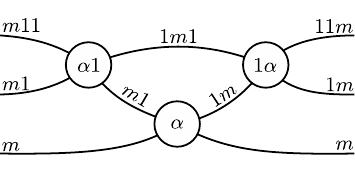} & \adjustbox{valign=c}{$=$} &
        \stringdiagramfigure{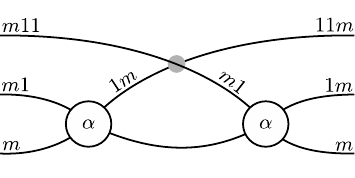} & \adjustbox{valign=c}{,}
        
        \end{tabular}
    \end{equation}

    \begin{enumerate}
        \item [b.] We have the triangle equation in $\mathbf{Hom}_\mathfrak{C}(A \, \Box \, A,A)$:
    \end{enumerate}

    \begin{equation} \label{eqn:AlgebraUnitality}
        \begin{tabular}{@{}cccc@{}}
        
        \stringdiagramfigure{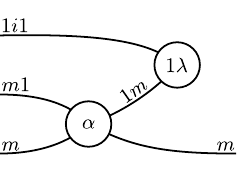} & \adjustbox{valign=c}{$=$} &
        \stringdiagramfigure{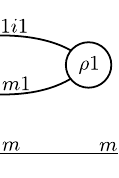} & \adjustbox{valign=c}{.}
        
        \end{tabular}
    \end{equation}
\end{Definition}

\subsection{Modules}

Suppose $(\mathfrak{C},\Box,\mathbf{I})$ is a semistrict monoidal 2-category.

\begin{Definition} \label{def:LeftModule}
    Let $(A,m,i,\alpha,\lambda,\rho)$ be an algebra in $\mathfrak{C}$. A left $A$-module in $\mathfrak{C}$ consists of:
    \begin{enumerate}
        \item An object $M$ in $\mathfrak{C}$;
        
        \item A 1-morphism, left $A$-action $l^M: A \, \Box \, M \to M$;
        
        \item Two 2-isomorphisms, associator $\mu^M$ and left unitor $\lambda^M$: \[\begin{tikzcd}
            {A \, \Box \, A \, \Box \, M} 
                \arrow[r, "m 1"]
                \arrow[d,"1 l^M"']
            & {A \, \Box \, M}
                \arrow[d, "l^M"]
                \arrow[ld,Rightarrow,shorten <= 10pt,shorten >= 10pt,"\mu^M"]
            \\ {A \, \Box \, M}
                \arrow[r, "l^M"']
            & {M}
        \end{tikzcd}, \begin{tikzcd}
            {\mathbf{I} \, \Box \, M} 
                \arrow[r, "i 1"]
                \arrow[d,equal]
            & {A \, \Box \, M}
                \arrow[d, "l^M"]
                \arrow[dl,Rightarrow,shorten <= 10pt,shorten >= 10pt,"\lambda^M"]
            \\ {M}
                \arrow[r,equal]
            & {M}
        \end{tikzcd},\]
    \end{enumerate} subject to the following coherence conditions:

    \begin{enumerate}
        \item [a.] We have the pentagon equation
    \end{enumerate}

    \begin{equation} \label{eqn:LeftModAssociativity}
        \begin{tabular}{@{}ccc@{}}
        
        \stringdiagramfigure{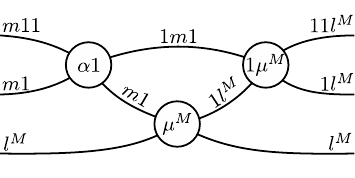} & \adjustbox{valign=c}{$=$} &
        \stringdiagramfigure{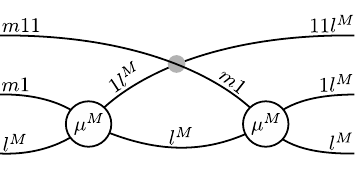}
        
        \end{tabular}
    \end{equation}

    \begin{enumerate}
        \item [] in $\mathbf{Hom}_\mathfrak{C}(A \, \Box \, A \, \Box \, A \, \Box \, M,M)$,
        
        \item [b.] We have the triangle equation
    \end{enumerate}

    \begin{equation} \label{eqn:LeftModUnitality}
        \begin{tabular}{@{}ccc@{}}
        
        \stringdiagramfigure{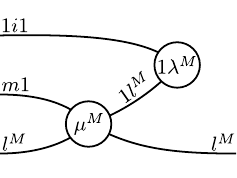} & \adjustbox{valign=c}{$=$} &
        \stringdiagramfigure{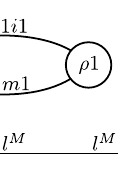}
        
        \end{tabular}
    \end{equation}

    \begin{enumerate}
        \item [] in $\mathbf{Hom}_\mathfrak{C}(A \, \Box \, M,M)$.
    \end{enumerate}
\end{Definition}

\begin{Definition} \label{def:RightModule}
    Dually, a right $A$-module in $\mathfrak{C}$ consists of:
    \begin{enumerate}
        \item An object $N$ in $\mathfrak{C}$;
        
        \item A 1-morphism, right $A$-action $n^N: N \, \Box \, A \to N$;
        
        \item Two 2-isomorphisms, associator $\nu^N$ and right unitor $\rho^N$: \[\begin{tikzcd}
            {N \, \Box \, A \, \Box \, A} 
                \arrow[r, "n^N 1"]
                \arrow[d,"1 m"']
            & {N \, \Box \, A}
                \arrow[d, "n^N"]
                \arrow[ld,Rightarrow,shorten <= 10pt,shorten >= 10pt,"\nu^N"]
            \\ {N \, \Box \, A}
                \arrow[r, "n^N"']
            & {N}
        \end{tikzcd}, \begin{tikzcd}
            {N \, \Box \, \mathbf{I}} 
                \arrow[r, "1 i"]
                \arrow[d,equal]
            & {N \, \Box \, A}
                \arrow[d, "n^N"]
                \arrow[dl,Rightarrow,shorten <= 10pt,shorten >= 10pt,"\rho^N"]
            \\ {N}
                \arrow[r,equal]
            & {N}
        \end{tikzcd},\]
    \end{enumerate} subject to the following coherence conditions:

    \begin{enumerate}
        \item [a.] We have the pentagon equation
    \end{enumerate}

    \begin{equation} \label{eqn:RightModAssociativity}
        \begin{tabular}{@{}ccc@{}}
        
        \stringdiagramfigure{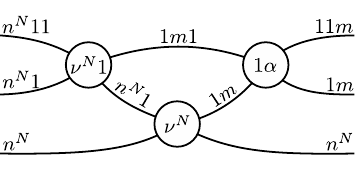} & \adjustbox{valign=c}{$=$} &
        \stringdiagramfigure{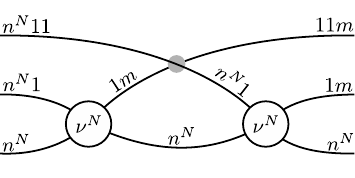}
        
        \end{tabular}
    \end{equation}

    \begin{enumerate}
        \item [] in $\mathbf{Hom}_\mathfrak{C}(N \, \Box \, A \, \Box \, A \, \Box \, A,N)$,
        
        \item [b.] We have the triangle equation
    \end{enumerate}

    \begin{equation} \label{eqn:RightModUnitality}
        \begin{tabular}{@{}ccc@{}}
        
        \stringdiagramfigure{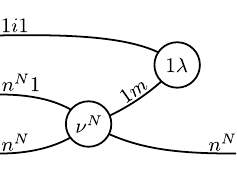} & \adjustbox{valign=c}{$=$} &
        \stringdiagramfigure{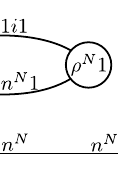}
        
        \end{tabular}
    \end{equation}

    \begin{enumerate}
        \item [] in $\mathbf{Hom}_\mathfrak{C}(N \, \Box \, A,N)$.
    \end{enumerate}
\end{Definition}

\begin{Definition} \label{def:Bimodule}
    Let $(A,m^A,i^A,\alpha^A,\lambda^A,\rho^A)$ and $(B,m^B,i^B,\alpha^B,\lambda^B,\rho^B)$ be two algebras in $\mathfrak{C}$. An $(A,B)$-bimodule in $\mathfrak{C}$ consists of:
    \begin{enumerate}
        \item An underlying object $M$ in $\mathfrak{C}$;
        
        \item A left $A$-module structure on $M$, $(l^M,\mu^M,\lambda^M)$;
        
        \item A right $B$-module structure on $M$, $(n^M,\nu^M,\rho^M)$;
        
        \item An additional 2-isomorphism, associator $\alpha^M$: \[\begin{tikzcd}
            {A \, \Box \, M \, \Box \, B} 
                \arrow[r, "l^M 1"]
                \arrow[d,"1 n^M"']
            & {M \, \Box \, B}
                \arrow[d, "n^M"]
                \arrow[ld,Rightarrow,shorten <= 10pt,shorten >= 10pt,"\alpha^M"]
            \\ {A \, \Box \, M}
                \arrow[r, "l^M"']
            & {M}
        \end{tikzcd}, \]
    \end{enumerate} subject to the following coherence conditions:

    \begin{enumerate}
        \item [a.] We have the pentagon equation
    \end{enumerate}

    \begin{equation} \label{eqn:BiModAssociativity1}
        \begin{tabular}{@{}ccc@{}}
        
        \stringdiagramfigure{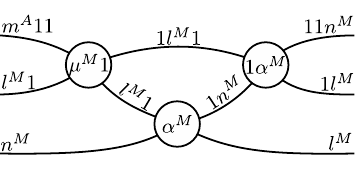} & \adjustbox{valign=c}{$=$} &
        \stringdiagramfigure{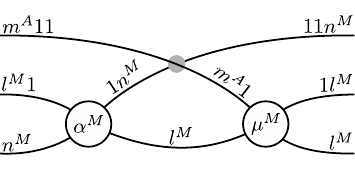}
        
        \end{tabular}
    \end{equation}

    \begin{enumerate}
        \item [] in $\mathbf{Hom}_\mathfrak{C}(A \, \Box \, A \, \Box \, M \, \Box \, B,M)$,
        
        \item [b.] We have the pentagon equation
    \end{enumerate}

    \begin{equation} \label{eqn:BiModAssociativity2}
        \begin{tabular}{@{}ccc@{}}
        
        \stringdiagramfigure{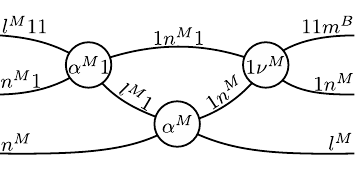} & \adjustbox{valign=c}{$=$} &
        \stringdiagramfigure{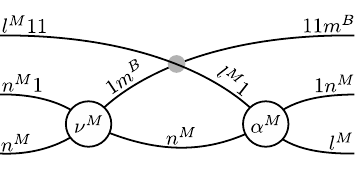}
        
        \end{tabular}
    \end{equation}

    \begin{enumerate}
        \item [] in $\mathbf{Hom}_\mathfrak{C}(A \, \Box \, M \, \Box \, B \, \Box \, B,M)$.
    \end{enumerate}
\end{Definition}

\begin{Definition} \label{def:LeftModule1Morphism}
    Let $M,N$ be two left $A$-modules in $\mathfrak{C}$. A left $A$-module 1-morphism $f: M \to N$ consists of:
    \begin{enumerate}
        \item An underlying 1-morphism $f: M \to N$ in $\mathfrak{C}$;
        
        \item A 2-isomorphism $\psi^f$: \[\begin{tikzcd}
            {A \, \Box \, M}
                \arrow[r,"1 f"]
                \arrow[d,"l^M"']
            & {A \, \Box \, N}
                \arrow[d,"l^N"]
                \arrow[ld,Rightarrow,shorten <= 10pt,shorten >= 10pt,"\psi^f"]
            \\ {M}
                \arrow[r,"f"']
            & {N}
        \end{tikzcd}, \]
    \end{enumerate} subject to the following coherence conditions:

    \begin{enumerate}
        \item [a.] We have the pentagon equation
    \end{enumerate}

    \begin{equation} \label{eqn:LeftMod1MorAssociativity}
        \begin{tabular}{@{}ccc@{}}
        
        \stringdiagramfigure{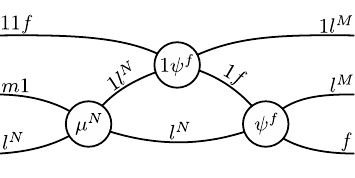} & \adjustbox{valign=c}{$=$} &
        \stringdiagramfigure{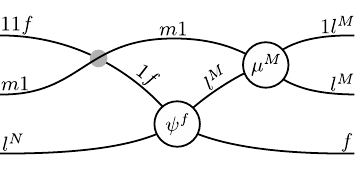}
        
        \end{tabular}
    \end{equation}

    \begin{enumerate}
        \item [] in $\mathbf{Hom}_\mathfrak{C}(A \, \Box \, A \, \Box \, M,N)$,
        
        \item [b.] We have the triangle equation
    \end{enumerate}

    \begin{equation} \label{eqn:LeftMod1MorUnitality}
        \begin{tabular}{@{}ccc@{}}
        
        \stringdiagramfigure{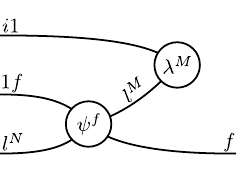} & \adjustbox{valign=c}{$=$} &
        \stringdiagramfigure{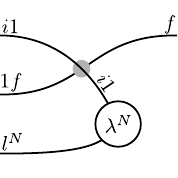}
        
        \end{tabular}
    \end{equation}

    \begin{enumerate}
        \item [] in $\mathbf{Hom}_\mathfrak{C}(M,N)$.
    \end{enumerate}
\end{Definition}

\begin{Definition} \label{def:RightModule1Morphism}
    Let $M,N$ be two right $B$-modules in $\mathfrak{C}$. A right $B$-module 1-morphism $f: M \to N$ consists of:
    \begin{enumerate}
        \item An underlying 1-morphism $f: M \to N$ in $\mathfrak{C}$;
        
        \item A 2-isomorphism $\psi^f$: \[\begin{tikzcd}
            {M \, \Box \, B}
                \arrow[r,"f 1"]
                \arrow[d,"n^M"']
            & {N \, \Box \, B}
                \arrow[d,"n^N"]
                \arrow[ld,Rightarrow,shorten <= 10pt,shorten >= 10pt,"\psi^f"]
            \\ {M}
                \arrow[r,"f"']
            & {N}
        \end{tikzcd}, \]
    \end{enumerate} subject to the following coherence conditions:

    \begin{enumerate}
        \item [a.] We have the pentagon equation
    \end{enumerate}

    \begin{equation} \label{eqn:RightMod1MorAssociativity}
        \begin{tabular}{@{}ccc@{}}
        
        \stringdiagramfigure{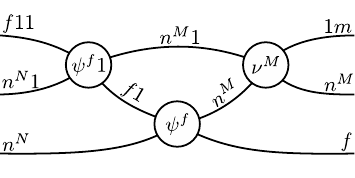} & \adjustbox{valign=c}{$=$} &
        \stringdiagramfigure{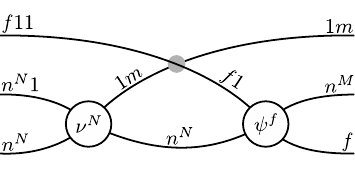}
        
        \end{tabular}
    \end{equation}

    \begin{enumerate}
        \item [] in $\mathbf{Hom}_\mathfrak{C}(M \, \Box \, B \, \Box \, B,N)$,
        
        \item [b.] We have the triangle equation
    \end{enumerate}

    \begin{equation} \label{eqn:RightMod1MorUnitality}
        \begin{tabular}{@{}ccc@{}}
        
        \stringdiagramfigure{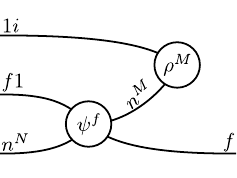} & \adjustbox{valign=c}{$=$} &
        \stringdiagramfigure{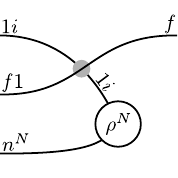}
        
        \end{tabular}
    \end{equation}

    \begin{enumerate}
        \item [] in $\mathbf{Hom}_\mathfrak{C}(M,N)$.
    \end{enumerate}
\end{Definition}

\begin{Definition} \label{def:Bimodule1Morphism}
    Let $M,N$ be two $(A,B)$-bimodules in $\mathfrak{C}$. An $(A,B)$-bimodule 1-morphism $f: M \to N$ consists of a 1-morphism $f: M \to N$ in $\mathfrak{C}$ with a left $A$-module 1-morphism structure $\varphi^f$ and a right $B$-module 1-morphism structure $\psi^f$, such that the hexagon equation

    \begin{equation} \label{eqn:BiMod1Mor}
        \begin{tabular}{@{}ccc@{}}
        
        \stringdiagramfigure{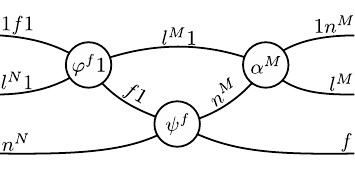} & \adjustbox{valign=c}{$=$} &
        \stringdiagramfigure{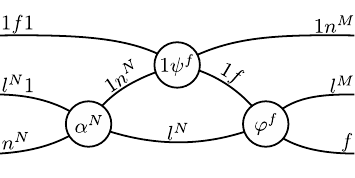}
        
        \end{tabular}
    \end{equation} holds in $\mathbf{Hom}_\mathfrak{C}(A \, \Box \, M \, \Box \, B,N)$.
\end{Definition}

\begin{Definition} \label{def:LeftModule2Morphism}
    Let $M,N$ be two left $A$-modules, and $f,g: M \to N$ be two left $A$-module 1-morphisms in $\mathfrak{C}$. A left $A$-module 2-morphism $\xi$ from $(f,\psi^f)$ to $(g,\psi^g)$ consists of a 2-morphism $\xi: f \to g$ in $\mathfrak{C}$ such that the following coherence condition

    \begin{equation} \label{eqn:LeftMod2Mor}
        \begin{tabular}{@{}ccc@{}}
        
        \stringdiagramfigure{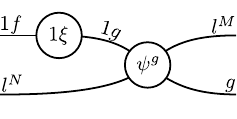} & \adjustbox{valign=c}{$=$} &
        \stringdiagramfigure{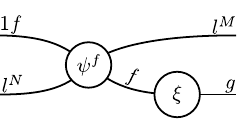}
        
        \end{tabular}
    \end{equation} holds in $\mathbf{Hom}_\mathfrak{C}(A \, \Box \, M,N)$.
\end{Definition}

\begin{Definition} \label{def:RightModule2Morphism}
    Let $M,N$ be two right $B$-modules, and $f,g: M \to N$ be two right $B$-module 1-morphisms in $\mathfrak{C}$. A right $B$-module 2-morphism $\xi$ from $(f,\psi^f)$ to $(g,\psi^g)$ consists of a 2-morphism $\xi: f \to g$ in $\mathfrak{C}$ such that the following coherence condition

    \begin{equation} \label{eqn:RightMod2Mor}
        \begin{tabular}{@{}ccc@{}}
        
        \stringdiagramfigure{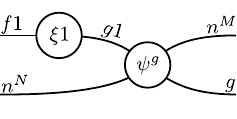} & \adjustbox{valign=c}{$=$} &
        \stringdiagramfigure{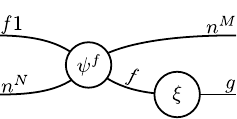}
        
        \end{tabular}
    \end{equation} holds in $\mathbf{Hom}_\mathfrak{C}(M \, \Box \, B,N)$.
\end{Definition}

\begin{Definition} \label{def:Bimodule2Morphism}
    Let $M,N$ be two $(A,B)$-bimodules, and $f,g: M \to N$ be two $(A,B)$-bimodule 1-morphisms in $\mathfrak{C}$. An $(A,B)$-bimodule 2-morphism $\xi$ from $(f,\chi^f,\psi^f)$ to $(g,\chi^g,\psi^g)$ consists of a 2-morphism $\xi: f \to g$ in $\mathfrak{C}$ such that $\xi$ is both a left $A$-module 2-morphism and a right $B$-module 2-morphism.
\end{Definition}

\begin{Proposition}
    For algebras $A,B$ in $\mathfrak{C}$, one has the following 2-categories:
    \begin{enumerate}
        \item There is a 2-category $\mathbf{Lmod}_\mathfrak{C}(A)$ whose \begin{itemize}
            \item objects are left $A$-modules,
            
            \item 1-morphisms are left $A$-module 1-morphisms, 
            
            \item 2-morphisms are left $A$-module 2-morphisms;
        \end{itemize} 

        \item There is a 2-category $\mathbf{Mod}_\mathfrak{C}(B)$ whose \begin{itemize}
            \item objects are right $B$-modules,
            
            \item 1-morphisms are right $B$-module 1-morphisms, 
            
            \item 2-morphisms are right $B$-module 2-morphisms;
        \end{itemize} 

        \item There is a 2-category $\mathbf{Bimod}_\mathfrak{C}(A,B)$ whose \begin{itemize}
            \item objects are $(A,B)$-bimodules,
            
            \item 1-morphisms are $(A,B)$-bimodule 1-morphisms, 
            
            \item 2-morphisms are $(A,B)$-bimodule 2-morphisms.
        \end{itemize}
    \end{enumerate}
\end{Proposition}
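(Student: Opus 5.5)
We need to check, for each of the three cases, that the data form a 2-category: we must define compositions and identities, and verify that the coherence conditions are preserved. Since $\mathfrak{C}$ is semistrict, each $\mathbf{Hom}_\mathfrak{C}(X,Y)$ is a category and composition of 1-morphisms is strictly associative and unital. The plan is to show that $\mathbf{Lmod}_\mathfrak{C}(A)$ is a locally full sub-2-category of a 2-category with extra structure on 1-morphisms. Strict associativity and unitality of composition will then be inherited from $\mathfrak{C}$ once we check that the structure 2-isomorphisms compose associatively.

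\textbf{Identities and composition of 1-morphisms.} The identity 1-morphism of a left $A$-module $M$ is $1_M$ with $\psi^{1_M}=\mathrm{id}_{l^M}$. Equations (\ref{eqn:LeftMod1MorAssociativity}) and (\ref{eqn:LeftMod1MorUnitality}) then reduce to tautologies. For $f:M\to N$ and $g:N\to P$, define $\psi^{gf}$ by pasting $\psi^f$ and $\psi^g$ along the common edge $l^N$. Graphically, this is stacking the two coupons, with the interchange law used to slide $1\,f$ and $1\,g$ past each other.

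\textbf{Coherence of the composite.} The pentagon (\ref{eqn:LeftMod1MorAssociativity}) for $gf$ is obtained as follows. First apply the pentagon for $g$ to move $\mu^P$ past $\psi^g$. Next use interchange to move this past the $f$-coupons. Finally apply the pentagon for $f$. The triangle (\ref{eqn:LeftMod1MorUnitality}) is handled in the same way, using the triangles for $g$ and then $f$. Pasting is strictly associative and unital, because vertical composition and whiskering in $\mathfrak{C}$ are; hence composition of module 1-morphisms is strictly associative and unital. The right module case is the mirror image.

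\textbf{2-morphisms.} Module 2-morphisms are closed under vertical composition and contain identities, since (\ref{eqn:LeftMod2Mor}) is compatible with stacking. They are also closed under horizontal composition. For this we check (\ref{eqn:LeftMod2Mor}) for $\zeta*\xi$ with $\xi:f\Rightarrow f'$ and $\zeta:g\Rightarrow g'$: the $\xi$-coupon slides past $\psi^g$ by interchange, then past $\psi^f$ by the condition on $\xi$, and $\zeta$ is handled analogously. The 2-category axioms themselves, namely interchange and associativity, then hold because they hold in $\mathfrak{C}$.

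\textbf{Bimodule case.} For bimodules we must additionally check that the composite $(\varphi^{gf},\psi^{gf})$ satisfies the hexagon (\ref{eqn:BiMod1Mor}). We do this by applying the hexagon for $g$, then interchange, then the hexagon for $f$. For the identity, the hexagon is trivial. Bimodule 2-morphisms are closed under the compositions by the left and right cases already established. I expect this hexagon verification to be the only step requiring care, mainly in keeping track of interchange moves. Even so, it is a routine diagram manipulation in the string calculus of \cite{DX,D8}.
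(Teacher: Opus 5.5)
Your proposal is correct. The paper states this proposition without proof and treats it as routine, so there is no argument there to compare with. Your direct verification is the standard one:
\begin{itemize}
\item identities with trivial structure 2-cells;
\item composite structure 2-cells obtained by pasting;
\item coherence of composites reduced to the coherence of each factor;
\item 2-morphism conditions checked by sliding coupons;
\item strict associativity and unitality inherited from $\mathfrak{C}$.
\end{itemize}

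Two points of precision are worth tightening.

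\textbf{Defining $\psi^{gf}$.} No interchange is needed here. In a semistrict monoidal 2-category, $\mathbf{1}_A \, \Box \, -$ is a strict 2-functor, so $\mathbf{1}_A \, \Box \, (g \circ f) = (\mathbf{1}_A \, \Box \, g) \circ (\mathbf{1}_A \, \Box \, f)$ holds on the nose. Hence $\psi^{gf}$ is simply $\psi^g$ whiskered by $\mathbf{1}_A \, \Box \, f$, followed by $\psi^f$ whiskered by $g$.

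\textbf{Where interchangers actually enter.} The Gray interchanger appears only in the pentagon~\eqref{eqn:LeftMod1MorAssociativity} and the triangle~\eqref{eqn:LeftMod1MorUnitality}, where it relates $m \, \Box \, \mathbf{1}$ (resp.\ $i \, \Box \, \mathbf{1}$) and $f$.
\begin{itemize}
\item For the composite, your step ``use interchange to move this past the $f$-coupons'' relies on the Gray-monoid axiom that the interchanger with $g \circ f$ is the pasting of the interchangers with $g$ and with $f$.
\item For the identity, it relies on the axiom that interchangers with identity 1-morphisms are trivial.
\item In the hexagon~\eqref{eqn:BiMod1Mor}, no Gray interchanger occurs at all. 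The only interchange used there is the ordinary 2-categorical one: 2-cells acting on disjoint parts of a composite commute.
\end{itemize}

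With these clarifications, every step you list goes through as stated.
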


\begin{Example}
    In $\mathfrak{C} = \mathbf{2Vect}$, the above notions reproduce the usual notions of left, right and bimodule categories \cite{Mu1,Mu2,Ost1}.
\end{Example}

\subsection{Relative Tensor Product}

Suppose $(\mathfrak{C},\Box,\mathbf{I})$ is a semistrict monoidal 2-category. Let $B$ be an algebra, $M$ be a right $B$-module, $N$ be a left $B$-module, $L$ be an object in $\mathfrak{C}$. 

\begin{Definition} \label{def:Balanced1Morphism}
    A $B$-balanced 1-morphism consists of 
    \begin{enumerate}
        \item A 1-morphism $f: M \, \Box \, N \to L$ in $\mathfrak{C}$;
        
        \item A 2-isomorphism $\tau^f$: \[\begin{tikzcd}
            {M \, \Box \, B \, \Box \, N}
                \arrow[r,"n^M 1"]
                \arrow[d,"1 l^N"']
            & {M \, \Box \, N}
                \arrow[d,"f"]
                \arrow[ld,Rightarrow,shorten <= 10pt,shorten >= 10pt,"\tau^f"]
            \\ {M \, \Box \, N}
                \arrow[r,"f"']
            & {L}
        \end{tikzcd},\]
    \end{enumerate} subject to the following coherence conditions:

    \begin{enumerate}
        \item [a.] We have the pentagon equation
    \end{enumerate}

    \begin{equation} \label{eqn:Balanced1MorAssociativity}
        \begin{tabular}{@{}ccc@{}}
        
        \stringdiagramfigure{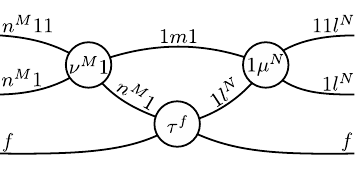} & \adjustbox{valign=c}{$=$} &
        \stringdiagramfigure{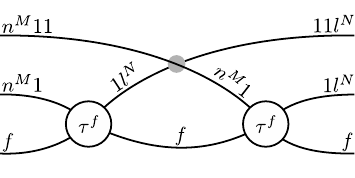}
        
        \end{tabular}
    \end{equation}

    \begin{enumerate}
        \item [] in $\mathbf{Hom}_\mathfrak{C}(M \, \Box \, B \, \Box \, B \, \Box \, N,L)$,
        
        \item [b.] We have the triangle equation
    \end{enumerate}

    \begin{equation} \label{eqn:Balanced1MorUnitality}
        \begin{tabular}{@{}ccc@{}}
        
        \stringdiagramfigure{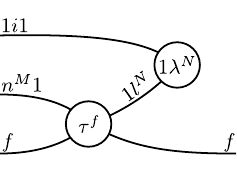} & \adjustbox{valign=c}{$=$} &
        \stringdiagramfigure{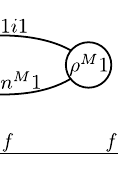}
        
        \end{tabular}
    \end{equation}

    \begin{enumerate}
        \item [] in $\mathbf{Hom}_\mathfrak{C}(M \, \Box \, N,L)$.
    \end{enumerate}
\end{Definition}

\begin{Definition} \label{def:Balanced2Morphism}
    Suppose $f,g:M \, \Box \, N \to L$ are two $B$-balanced 1-morphisms. A $B$-balanced 2-morphism $\xi$ from $(f,\tau^f)$ to $(g,\tau^g)$ consists of a 2-morphism $\xi: f \to g$ in $\mathfrak{C}$ such that the following coherence condition

    \begin{equation} \label{eqn:Balanced2Mor}
        \begin{tabular}{@{}ccc@{}}
        
        \stringdiagramfigure{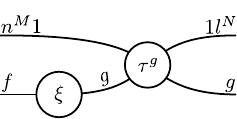} & \adjustbox{valign=c}{$=$} &
        \stringdiagramfigure{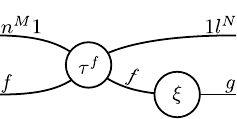}
        
        \end{tabular}
    \end{equation} holds in $\mathbf{Hom}_\mathfrak{C}(M \, \Box \, B \, \Box \, N,L)$.
\end{Definition}

\begin{Proposition}
    There is a category $\mathbf{Bal}_B(M,N;L)$ whose objects are $B$-balanced 1-morphisms and whose morphisms are $B$-balanced 2-morphisms in $\mathfrak{C}$.
\end{Proposition}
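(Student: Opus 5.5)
To see that $\mathbf{Bal}_B(M,N;L)$ is a category, I will obtain it as a subcategory of the hom-category $\mathbf{Hom}_\mathfrak{C}(M \, \Box \, N, L)$. Morphisms compose by vertical composition of 2-morphisms in $\mathfrak{C}$, and identities are identity 2-morphisms. Associativity and unitality of composition are then inherited from vertical composition in the hom-category. So the only things to verify are the following.
\begin{enumerate}
    \item For each $B$-balanced 1-morphism $(f,\tau^f)$, the identity $\mathrm{id}_f$ satisfies the coherence condition \eqref{eqn:Balanced2Mor}.
    \item If $\xi:(f,\tau^f)\to(g,\tau^g)$ and $\zeta:(g,\tau^g)\to(h,\tau^h)$ are $B$-balanced 2-morphisms, then so is $\zeta\circ\xi$.
\end{enumerate}

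For the identity, condition \eqref{eqn:Balanced2Mor} compares two composites. On one side, $\xi$ is whiskered onto the $f$-string after $n^M 1$ and then followed by $\tau^g$. On the other side, $\tau^f$ is followed by $\xi$ whiskered after $1\, l^N$. With $\xi=\mathrm{id}_f$, both whiskerings are identities. Both sides therefore reduce to $\tau^f$, and the equation holds trivially.

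For composition, I will use the interchange law in the semistrict monoidal 2-category. Whiskering $\zeta\circ\xi$ by $n^M 1$ equals the vertical composite of the whiskerings of $\xi$ and of $\zeta$; the same holds for whiskering by $1\,l^N$. Graphically, the coupon $\zeta\circ\xi$ splits into two stacked coupons on the same string. I then slide $\tau$ across them in two steps:
\begin{enumerate}
    \item apply \eqref{eqn:Balanced2Mor} for $\zeta$ to move $\tau^h$ past $\zeta$, turning it into $\tau^g$;
    \item apply \eqref{eqn:Balanced2Mor} for $\xi$ to move $\tau^g$ past $\xi$, turning it into $\tau^f$.
\end{enumerate}
The result is exactly the other side of the equation for $\zeta\circ\xi$.

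There is no real obstacle here. The only point needing care is bookkeeping, namely which string each $\xi$ and $\zeta$ coupon sits on in \eqref{eqn:Balanced2Mor}. The associativity and unit conditions \eqref{eqn:Balanced1MorAssociativity} and \eqref{eqn:Balanced1MorUnitality} constrain the objects, but they play no role in closing the morphisms under composition.
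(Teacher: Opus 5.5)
Your proposal is correct and is the routine check the paper leaves implicit, since it states this proposition without proof. Composition and identities come from vertical composition in $\mathbf{Hom}_\mathfrak{C}(M \, \Box \, N, L)$, and the condition \eqref{eqn:Balanced2Mor} is preserved by identities and by vertical composition because whiskering is functorial. One small wording point: objects are pairs $(f,\tau^f)$, and a single $f$ may carry several balanced structures, so this is a category with a faithful forgetful functor to that hom-category rather than literally a subcategory; this changes nothing in your argument.
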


\begin{Definition} \label{def:RelativeTensorProduct}
    The relative tensor product of $M$ and $N$ over $B$ consists of:
    \begin{enumerate}
        \item An object $M \, \Box_B \, N$ in $\mathfrak{C}$,
        
        \item A $B$-balanced 1-morphism $(t,\tau^t): M \, \Box \, N \to M \, \Box_B \, N$,
    \end{enumerate} satisfying the following 2-universal property:

    \begin{enumerate}
        \item For any object $L$ in $\mathfrak{C}$ and any $B$-balanced 1-morphism $(f,\tau^f): M \, \Box \, N \to L$, there exists a 1-morphism $\widetilde{f}: M \, \Box_B \, N \to L$ and a 2-isomorphism $\phi:\widetilde{f} \circ_1 t \to f$ in $\mathfrak{C}$;
        
        \item For any 1-morphisms $g,h:M \, \Box_B \, N \to L$ and any $B$-balanced 2-morphism $\zeta: g \circ_1 t \to h \circ_1 t$, there exists a unique 2-morphism $u: g \to h$ in $\mathfrak{C}$ such that $\zeta = u \circ_1 t$.
    \end{enumerate}
\end{Definition}

\begin{Remark}
    The relative tensor product $M \, \Box_B \, N$ can be characterized as the object in $\mathfrak{C}$ that (co-)represents the 2-functor $\mathbf{Bal}_B(M,N;-)$.
\end{Remark}

\begin{Remark}
    We can also view the relative tensor product $M \, \Box_B \, N$ as the 2-colimit\footnote{This is the special type of 2-colimit that is also denoted as \textit{pseudo colimit}, i.e. all 2-cells in its 2-universal property is invertible.} over the diagram \[\begin{tikzcd}
        {M \, \Box \, B \, \Box \, B \, \Box \, N} 
            \arrow[r,shift left= 10pt,"n 1 1"]
            \arrow[r,shift right= 10pt,"1 1 l"]
            \arrow[r,"1 m 1"]
        & {M \, \Box \, B \, \Box \, N}
        \arrow[r,shift left= 5pt,"n 1"]
        \arrow[r,shift right= 5pt,"1 l"]
        & {M \, \Box \, N}
    \end{tikzcd}.\]
\end{Remark}

\begin{Example}
    In $\mathfrak{C} = \mathbf{2Vect}$, the above notions reproduce the usual notions of balanced functor and relative Deligne tensor product \cite{ENO09,DSPS14}.
\end{Example}

\begin{Proposition} \label{prop:AssociativityAndUnitalityOfRelativeTensorProduct}
    Let $A$, $B$, $C$, $D$ be algebras in $\mathfrak{C}$. Suppose relevant relative tensor products exist in the following statements.
    \begin{enumerate}
        \item \textbf{Associativity.} For any $(A,B)$-bimodule $M$, $(B,C)$-bimodule $N$, and $(C,D)$-bimodule $P$, there is a canonical $(A,D)$-bilinear equivalence
        \[
            \pmb{\alpha}_{M,N,P} \colon (M \, \Box_B \, N) \, \Box_C \, P \xrightarrow{\;\sim\;} M \, \Box_B \, (N \, \Box_C \, P),
        \]
        natural in $M$, $N$, and $P$.

        \item \textbf{Unitality.} For any $(A,B)$-bimodule $M$, there are canonical $(A,B)$-bilinear equivalences
        \[
            \pmb{l}_M \colon A \, \Box_A \, M \xrightarrow{\;\sim\;} M \quad \text{and} \quad \pmb{r}_M \colon M \, \Box_B \, B \xrightarrow{\;\sim\;} M,
        \]
        natural in $M$, where $A$ and $B$ are viewed as $(A,A)$- and $(B,B)$-bimodules via their respective multiplications.
    \end{enumerate}
\end{Proposition}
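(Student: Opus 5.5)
The plan is to work entirely through the 2-universal property of Definition~\ref{def:RelativeTensorProduct}. The principle underlying every step is the following: an object representing $\mathbf{Bal}_B(M,N;-)$ is determined up to an equivalence that is unique up to unique 2-isomorphism. So, to build the comparison equivalences, it suffices to produce balanced 1-morphisms and identify the categories of balanced maps they corepresent.

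\textbf{Unitality.} We first treat $\pmb{r}_M$; the case of $\pmb{l}_M$ is symmetric. We show that $M$ itself, together with the action $n^M\colon M\,\Box\,B\to M$ and the balancing $\tau^{n^M}:=\nu^M$, is a relative tensor product $M\,\Box_B\,B$.
\begin{enumerate}
\item The pentagon (\ref{eqn:Balanced1MorAssociativity}) for $\tau^{n^M}$ is exactly (\ref{eqn:RightModAssociativity}), and the triangle (\ref{eqn:Balanced1MorUnitality}) follows from (\ref{eqn:RightModUnitality}).
\item Given a balanced map $(f,\tau^f)\colon M\,\Box\,B\to L$, set $\widetilde f:=f\circ_1(1\,i)\colon M\to L$. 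The required 2-isomorphism $\widetilde f\circ_1 n^M\cong f$ is obtained by whiskering $\tau^f$ with $1\,i$ placed in the middle slot, and then applying the left unitor $\lambda$ of $B$.
\item For the uniqueness of 2-morphisms, a balanced $\zeta\colon g\circ_1 n^M\to h\circ_1 n^M$ determines $u$ by restricting along $1\,i$ and composing with $\rho^M$. The identity $\zeta=u\circ_1 n^M$ then follows from the balancing condition (\ref{eqn:Balanced2Mor}) combined with unitality, and uniqueness follows because precomposition with $1\,i$ followed by $\rho^M$ is invertible.
\end{enumerate}
Since any two relative tensor products are equivalent, this gives $\pmb{r}_M\colon M\,\Box_B\,B\simeq M$. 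It is left $A$-linear via $\alpha^M$, and it is natural in $M$ because every datum used is built from the module structure.

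\textbf{Associativity.} Both $(M\,\Box_B\,N)\,\Box_C\,P$ and $M\,\Box_B\,(N\,\Box_C\,P)$ will be shown to corepresent the category of \emph{$(B,C)$-bibalanced} maps $M\,\Box\,N\,\Box\,P\to L$. Such a map carries two balancings $\tau_B$ and $\tau_C$ subject to a compatibility hexagon on $M\,\Box\,B\,\Box\,N\,\Box\,C\,\Box\,P$, modelled on (\ref{eqn:BiMod1Mor}) and involving $\alpha^N$. The first step is a ``Fubini'' lemma: a $C$-balanced map out of $(M\,\Box_B\,N)\,\Box\,P$ is the same thing, up to equivalence of categories, as a bibalanced map out of $M\,\Box\,N\,\Box\,P$. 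This requires that $-\,\Box\,P$ preserves the colimit defining $M\,\Box_B\,N$, i.e.\ that $(M\,\Box_B\,N)\,\Box\,P$, with balanced map $t\,\Box\,1$, is a relative tensor product of $M$ and $N\,\Box\,P$. I will include this in the hypothesis ``relevant relative tensor products exist,'' meaning the products are compatible with $\Box$, as holds in $\mathbf{2Vect}$.

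\textbf{Main obstacle.} The main difficulty is this Fubini step: transporting the $C$-action on $M\,\Box_B\,N$, defined by the universal property applied to $t\circ_1(1\,n^N)$, and checking that the induced balancings satisfy the pentagons. Here I would use the 2-morphism uniqueness clause of Definition~\ref{def:RelativeTensorProduct} repeatedly. Its effect is that every coherence equation needs to be checked only after precomposition with $t$, where it reduces to the bimodule axioms (\ref{eqn:BiModAssociativity1}) and (\ref{eqn:BiModAssociativity2}). Once both sides corepresent the same 2-functor, Yoneda yields $\pmb{\alpha}_{M,N,P}$. $(A,D)$-linearity and naturality are obtained in the same way, since the outer actions commute with all constructions, again verified after precomposition with the universal balanced maps.
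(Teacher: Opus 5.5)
Your outline is sound. It takes a different route from the paper, which gives no argument of its own and simply cites \cite[3.2.5, 3.2.7]{D8}.

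Those results are set in a multifusion 2-category with separable algebras. There, relative tensor products are splittings of 2-condensation monads. Such splittings are preserved by every 2-functor, in particular by $X\,\Box\,-$ and $-\,\Box\,X$. So your ``Fubini'' step and the bimodule structures on relative tensor products come for free, and ``relevant relative tensor products exist'' really is enough.

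You instead work directly from Definition~\ref{def:RelativeTensorProduct}, for arbitrary algebras in an arbitrary semistrict monoidal 2-category, and you correctly identify the price: associativity needs the hypothesis that $\Box$ preserves the relevant relative tensor products in each variable. This is not an artefact of your method. Without it, $M\,\Box_B\,N$ cannot in general be given its $(A,C)$-bimodule structure, because a 1-morphism out of $A\,\Box\,(M\,\Box_B\,N)$ can only be built from a universal property of that object. Your formulation is therefore arguably the one the paper's main construction actually relies on, since there $A$ and $B$ are only Frobenius.

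Your unitality argument is also stronger than you claim. It uses only the unit $i$, with no Frobenius or separability data. The same proof shows that $1\,n^M\,1$ exhibits $X\,\Box\,M\,\Box\,Y$ as $(X\,\Box\,M)\,\Box_B\,(B\,\Box\,Y)$ for all objects $X,Y$. Hence $M\,\Box_B\,B$ is preserved by $\Box$ on both sides, and its $(A,B)$-bimodule structure needs no extra assumption.

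Three small repairs are needed.
\begin{enumerate}
\item In step~2 the unit belongs in the \emph{last} slot. Write $(1\,i)\circ_1 n^M \cong (n^M\,1)\circ_1(1\,1\,i)$, whisker $\tau^f$ by $1\,1\,i$, and finish with the \emph{right} unitor $\rho^B$. Inserting $i$ in the middle slot and applying $\lambda^B$ only gives the endo-2-cell of $f$ governed by \eqref{eqn:Balanced1MorUnitality}, not the needed $\widetilde f\circ_1 n^M\cong f$.
\item In step~3, the identity $\zeta = u\circ_1 n^M$ uses the unit coherence $\rho^M\circ_1 n^M = (n^M\circ_1(1\,\rho^B))\circ_2(\nu^M\circ_1(1\,1\,i))$, up to the interchanger. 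This is not one of the listed axioms, but it follows from \eqref{eqn:RightModAssociativity} and \eqref{eqn:RightModUnitality} by a Kelly-type argument.
\item $\pmb{r}_M$ must be right $B$-linear as well as left $A$-linear. The structure 2-cell for the right $B$-linearity is again $\nu^M$.
\end{enumerate}
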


\begin{proof}
    See \cite[3.2.5]{D8} and \cite[3.2.7]{D8}.
\end{proof}

Following Décoppet \cite{D4,D7,D8}, we have the following collection of results.

\begin{Theorem}
    Let $\mathfrak{C}$ be a fusion 2-category.
    \begin{enumerate}
        \item Relative tensor products over separable algebras always exist in $\mathfrak{C}$;
        
        \item For any separable $A$ in $\mathfrak{C}$, it induces a 2-functor \[- \Box_A -: \mathbf{Mod}_\mathfrak{C}(A) \boxtimes \mathbf{Lmod}_\mathfrak{C}(A) \to \mathfrak{C}; \]
        
        \item Relative tensor products are compatible with redundant module structures, i.e. for any separable algebras $A,B,C$ in $\mathfrak{C}$, the above 2-functor gets promoted to \[- \Box_B -: \mathbf{Bimod}_\mathfrak{C}(A,B) \boxtimes \mathbf{Bimod}_\mathfrak{C}(B,C) \to \mathbf{Bimod}_\mathfrak{C}(A,C); \]
        
        \item Moreover, relative tensor products are associative and unital, so that they together form a Morita 3-category\footnote{In general, even if we start with a semistrict monoidal 2-category, the Morita 3-category we constructed is not semistrict; universal properties of relative tensor products induce coherence data, including associator $\pmb{\alpha}$, left unitor $\pmb{l}$, right unitor $\pmb{r}$, see \cite[Section 3.2]{D8}} where \begin{itemize}
            \item Objects are separable algebras in $\mathfrak{C}$,
            
            \item 1-morphisms are bimodules with composition given by relative tensor products,
            
            \item 2-morphisms are bimodule 1-morphisms,
            
            \item 3-morphisms are bimodule 2-morphisms;
        \end{itemize}

        \item This Morita 3-category is equivalent to $\mathbf{Lmod}(\mathfrak{C})$, the 3-category of finite semisimple module 2-categories over $\mathfrak{C}$;
        
        \item Finally, this Morita 3-category is fully dualizable; in particular, every 1-morphism has a left and a right adjoint.
    \end{enumerate}
\end{Theorem}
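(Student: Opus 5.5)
This theorem collects results of Décoppet \cite{D4,D7,D8}. I would prove it by assembling those results in the order listed, treating each earlier item as input to the next, rather than by direct string-diagram computation.

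\textbf{Items (1)--(3).} For (1), the key input is that a separable algebra $A$ comes with a splitting of $m$ as a bimodule 1-morphism, that is, a section $\Delta$ with $m\circ\Delta\cong 1$ compatible with the actions. With this, $M\,\Box_A\,N$ is constructed as the splitting of a condensation monad on $M\,\Box\,N$. The monad is the composite
\[ M\,\Box\,N \xrightarrow{1\,(\Delta\circ i)\,1} M\,\Box\,A\,\Box\,A\,\Box\,N \xrightarrow{n\, l} M\,\Box\,N, \]
and it is split using Karoubi completeness of a fusion 2-category. The balanced structure $\tau^t$ comes from the associators $\nu^M,\mu^N$ together with the Frobenius-type compatibility of $\Delta$. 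Checking the 2-universal property of Definition~\ref{def:RelativeTensorProduct} then reduces to the standard fact that splittings of condensation monads are 2-colimits of the form shown in the Remark. For (2), 2-functoriality follows from the universal property: module 1- and 2-morphisms induce balanced 1- and 2-morphisms, and these descend uniquely. For (3), the residual $A$- and $C$-actions on $M\,\Box\,N$ are balanced in the middle variable, so they descend to $M\,\Box_B\,N$. The descended coherence 2-cells satisfy the bimodule pentagons (Definition~\ref{def:Bimodule}) by the uniqueness clause of the 2-universal property.

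\textbf{Item (4).} Here I would invoke Proposition~\ref{prop:AssociativityAndUnitalityOfRelativeTensorProduct} for $\pmb\alpha,\pmb l,\pmb r$. The higher coherences (pentagonator and friends) are obtained once more from uniqueness in the universal property: both sides of each required equation are 2-cells between maps out of an iterated relative tensor product, and they agree after precomposition with the iterated balanced map $t$.

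\textbf{Item (5).} I would send the algebra $A$ to the 2-category $\mathbf{Mod}_\mathfrak{C}(A)$, viewed as a left $\mathfrak{C}$-module. A bimodule ${}_AM_B$ goes to $-\,\Box_A\,M$, with the analogous assignments on higher cells. Full faithfulness on hom 2-categories, $\mathbf{Bimod}_\mathfrak{C}(A,B)\simeq \mathrm{Fun}_\mathfrak{C}(\mathbf{Mod}(A),\mathbf{Mod}(B))$, is an Eilenberg--Watts argument. Essential surjectivity is the substantive input: every finite semisimple module 2-category is of the form $\mathbf{Mod}_\mathfrak{C}(A)$ for a separable $A$, obtained as the internal endomorphism algebra of a generating object \cite{D4}. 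Establishing this is the main obstacle of the whole proof. In particular one must show that this internal End exists (via adjoints of the action 2-functor) and is separable, using semisimplicity and the fact that the characteristic of the ground field is zero.

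\textbf{Item (6).} Full dualizability would be transported along (5) to $\mathbf{Lmod}(\mathfrak{C})$, where it is shown in \cite{D7}. The statement about adjoints can also be seen directly in the Morita picture. The dual bimodule of ${}_AM_B$ is built from $M^\lor$, with unit and counit induced from $c,e$ through the sections of the separable multiplications. This gives adjoints for all bimodules, and for bimodule 1-morphisms via semisimplicity of the hom categories.
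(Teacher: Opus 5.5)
Your proposal follows essentially the same route as the paper, which gives no independent argument for this theorem and simply attributes all six items to D\'ecoppet \cite{D4,D7,D8}; your sketches (a 2-condensation monad on $M\,\Box\,N$ for (1), descent along the universal property for (2)--(4), internal endomorphism algebras for (5)) are consistent with how those cited results are obtained. One slip to correct in (1): separability does not give $m\circ\Delta\cong\mathbf{1}_A$, only an $(A,A)$-bilinear section $\gamma\colon \mathbf{1}_A\Rightarrow m\circ\Delta$ of the counit $\epsilon\colon m\circ\Delta\Rightarrow\mathbf{1}_A$ (Definition~\ref{def:SeparableAlgebra}), and this retract is exactly what the 2-condensation monad structure on your composite uses.
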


\begin{Example}
    When $\mathfrak{C} = \mathbf{2Vect}$, the above Morita 3-category has been constructed in \cite{DSPS13}. From a higher condensation point of view \cite{GJF}, this Morita 3-category is the candidate for $\mathbf{3Vect}$. For a general fusion 2-category $\mathfrak{C}$, its Morita 3-category $\mathbf{Lmod}(\mathfrak{C})$ is equivalent to its condensation completion $\Sigma \mathfrak{C} = \mathbf{Kar}(\mathrm{B} \mathfrak{C})$.
\end{Example}

\section{Frobenius Algebras in Monoidal 2-Categories}
Let $\mathfrak{C}$ be a semistrict monoidal 2-category. Then its opposite 2-category $\mathfrak{C}^{1op}$ equipped with the same monoidal product is also a semistrict monoidal 2-category.

\subsection{Coalgebras}

\begin{Definition} \label{def:Coalgebra}
    A coalgebra in $\mathfrak{C}$ is an algebra in $\mathfrak{C}^{1op}$. Equivalently, it consists of:
    \begin{enumerate}
        \item An object $C$ in $\mathfrak{C}$;
        
        \item Two 1-morphisms, comultiplication $\Delta: C \to C \, \Box \, C$ and counit $d:C \to \mathbf{\mathbf{I}}$;
        
        \item Three 2-isomorphisms, coassociator $\alpha$, left counitor $\lambda$ and right counitor $\rho$: \[\begin{tikzcd}
            {C \, \Box \, C \, \Box \, C} 
            & {C \, \Box \, C}
                \arrow[l, "\Delta 1"']
                \arrow[ld,Rightarrow,shorten <= 10pt,shorten >= 10pt,"\alpha"]
            \\ {C \, \Box \, C}
                \arrow[u,"1 \Delta"]
            & {C}
                \arrow[l, "\Delta"]
                \arrow[u, "\Delta"']
        \end{tikzcd}, \begin{tikzcd}
            {\mathbf{I} \, \Box \, C}
                \arrow[d,equal]
            & {C \, \Box \, C}
                \arrow[l, "d 1"']
                \arrow[r,"1 d"]
                \arrow[dl,Rightarrow,shorten <= 10pt,shorten >= 10pt,"\lambda"']
                \arrow[dr,Rightarrow,shorten <= 10pt,shorten >= 10pt,"\rho"]
            & {C \, \Box \, \mathbf{I}} 
                \arrow[d,equal]
            \\ {C}
                \arrow[r, equal]
            & {C}
                \arrow[u, "\Delta"']
            & {C}
                \arrow[l,equal]
        \end{tikzcd},\]
    \end{enumerate} subject to the following coherence conditions:

    \begin{enumerate}
        \item [a.] We have the pentagon equation
    \end{enumerate}

    \begin{equation} \label{eqn:CoalgebraAssociativity}
        \begin{tabular}{@{}ccc@{}}
        
        \stringdiagramfigure{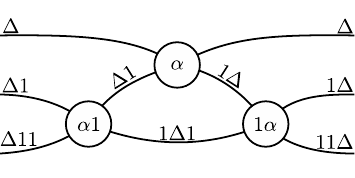} & \adjustbox{valign=c}{$=$} &
        \stringdiagramfigure{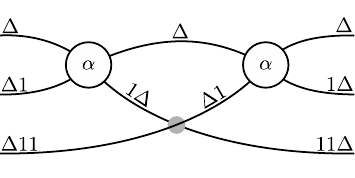}
        
        \end{tabular}
    \end{equation}

    \begin{enumerate}
        \item [] in $\mathbf{Hom}_\mathfrak{C}(C,C \, \Box \, C \, \Box \, C \, \Box \, C)$,
        
        \item [b.] We have the triangle equation
    \end{enumerate}

    \begin{equation} \label{eqn:CoalgebraUnitality}
        \begin{tabular}{@{}ccc@{}}
        
        \stringdiagramfigure{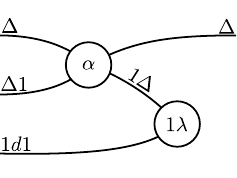} & \adjustbox{valign=c}{$=$} &
        \stringdiagramfigure{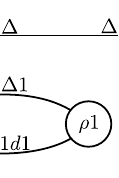}
        
        \end{tabular}
    \end{equation}

    \begin{enumerate}
        \item [] in $\mathbf{Hom}_\mathfrak{C}(C,C \, \Box \, C)$.
    \end{enumerate}
\end{Definition}

\begin{Remark} \label{rmk:Coalgebra}
    A coalgebra in monoidal 2-category $\mathfrak{C}$ is equivalent to an algebra either in monoidal 2-category $\mathfrak{C}^{1op}$ or $\mathfrak{C}^{1op,2op}$. Both cases turn out to be the same because all 2-cells are invertible in the above definition of coalgebras. If we weaken these data to get an (op)lax version of coalgebras in monoidal 2-categories then they will correspond to different (op)lax version of algebras in $\mathfrak{C}^{1op}$ or $\mathfrak{C}^{1op,2op}$, respectively.
\end{Remark}

\begin{Lemma}
    Let $A$ be an algebra in a monoidal 2-category $\mathfrak{C}$ such that the underlying object of $A$ admits a left or right dual. Then its dual object is equipped with a coalgebra structure.
\end{Lemma}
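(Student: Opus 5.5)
We transport the algebra structure across the duality, i.e.\ we take mates. Treat the case of a right dual $A^\lor$ with data $(c,e,E,F)$ as in Definition~\ref{def:CoherentDual}; the left dual case is the mirror image and is handled by the same argument with left and right exchanged.

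\textbf{Comultiplication.} Define $\Delta: A^\lor \to A^\lor \,\Box\, A^\lor$ as the mate of $m$, namely the composite
\[
A^\lor \xrightarrow{c\,c\,1} A^\lor \Box A \Box A^\lor \Box A \Box A^\lor \xrightarrow{1\,1\,(\text{rebracket})\,1} \cdots
\]
More concretely, in string diagrams: bend the two input strands of $m$ upward using two copies of the unit $c$, with the nesting chosen so that $A^\lor\,\Box\,A^\lor$ appears in the correct order. Then bend the output strand downward using the counit $e$.

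\textbf{Counit.} Define $d: A^\lor \to \mathbf{I}$ as the mate of $i$, namely
\[
A^\lor \xrightarrow{1\, i} A^\lor \Box A \xrightarrow{?} \mathbf{I}.
\]
Here one needs a pairing $A^\lor\Box A\to\mathbf I$. For a right dual in the convention of Definition~\ref{def:CoherentDual} the available counit is $e: A \Box A^\lor\to\mathbf I$, so we instead set $d := e\circ(i\,1)$. We then adjust $\Delta$ consistently, so that all bendings use $e$ on the left and $c$ on the right.

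\textbf{The 2-isomorphisms.} The coassociator, left counitor and right counitor are obtained as the mates of $\alpha$, $\lambda$ and $\rho$. Each is obtained by whiskering the corresponding algebra 2-cell with $c$'s and $e$'s. Wherever a $c$ meets an $e$ in a zigzag created by composing two mates (for instance in $(\Delta\,1)\circ\Delta$), we insert $E$ or $F$, or their inverses, to straighten it. Thus $(1\,\Delta)\Delta$ and $(\Delta\,1)\Delta$ are each 2-isomorphic to the mate of the corresponding triple multiplication, and the coassociator is the conjugate of the mate of $\alpha$ by these straightenings. The counitors are built in the same way.

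\textbf{Coherence.} The pentagon \eqref{eqn:CoalgebraAssociativity} and the triangle \eqref{eqn:CoalgebraUnitality} are obtained by taking mates of \eqref{eqn:AlgebraAssociativity} and \eqref{eqn:AlgebraUnitality}. Mating is compatible with vertical composition of 2-cells. The remaining task is to check that the different choices of zigzag straightenings appearing on the two sides agree. This is the main obstacle. The interchange law handles cells that are far apart. Where two straightenings interact, we use the swallowtail-type equations \eqref{eqn:CoherentDual1} and \eqref{eqn:CoherentDual2}; this is exactly why a coherent dual is assumed rather than just a dual. A cleaner route, which we would try first, is the following:
\begin{enumerate}
\item Observe that $(-)^\lor$ extends to a monoidal 2-functor $\mathfrak{C}^{\mathrm{dual}}\to\mathfrak{C}^{1op}$ on the full sub-2-category of coherently dualizable objects, with monoidal structure reversed, using the coherent-dual data $E,F$.
\item Note that monoidal 2-functors send algebras to algebras.
\item Conclude that $A^\lor$ is an algebra in $\mathfrak{C}^{1op}$, hence a coalgebra in $\mathfrak{C}$ by Definition~\ref{def:Coalgebra}.
\end{enumerate}
If establishing that functoriality is too heavy, fall back on the direct diagrammatic verification described above.
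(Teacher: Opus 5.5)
Your ``cleaner route'' is exactly the paper's proof: $(-)^\lor$ is a monoidal 2-functor into $\mathfrak{C}^{1op}$ (with reversed product), monoidal 2-functors carry algebras to algebras, and an algebra in $\mathfrak{C}^{1op}$ is a coalgebra in $\mathfrak{C}$; the paper cites \cite[Appendix A]{DX} for the functoriality and Remark~\ref{rmk:Coalgebra} for the last step. Your mate-based fallback just unpacks the same argument, but only the nested definition of $\Delta$ is valid, since the side-by-side $c\,c\,1$ composite leaves the two copies of $A$ separated by $A^\lor$ and no ``rebracketing'' can bring them together without a braiding.
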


\begin{proof}
    This follows immediately from Remark \ref{rmk:Coalgebra} and the fact that taking duals is a monoidal 2-functor \cite[Appendix A]{DX}.
\end{proof}

\subsection{Frobenius Algebras}

\begin{Definition} \label{def:FrobeniusAlgebra}
    A \textit{Frobenius algebra} in $\mathfrak{C}$ consists of:
    \begin{enumerate}
        \item An object $A$ in $\mathfrak{C}$;

        \item An algebra structure $(A,m,i,\alpha,\lambda,\rho)$ on $A$;

        \item A coalgebra structure $(A,\Delta,d,\widetilde{\alpha},\widetilde{\lambda},\widetilde{\rho})$ on $A$;

        \item The comultiplication $\Delta$ is equipped with an $(A,A)$-bilinear structure with left Frobeniusator $\psi^l$ and right Frobeniusator $\psi^r$: \[\begin{tikzcd}
            {A \, \Box \, A}
                \arrow[r,"1 \Delta"]
                \arrow[d,"m"']
            & {A \, \Box \, A \, \Box \, A}
                \arrow[d,"m 1"]
                \arrow[ld,Rightarrow,shorten <= 15pt,shorten >= 15pt,"\psi^l"]
            \\ {A}
                \arrow[r,"\Delta"']
            & {A \, \Box \, A}
        \end{tikzcd}, \begin{tikzcd}
            {A \, \Box \, A}
                \arrow[r,"\Delta 1"]
                \arrow[d,"m"']
            & {A \, \Box \, A \, \Box \, A}
                \arrow[d,"1 m"]
                \arrow[ld,Rightarrow,shorten <= 15pt,shorten >= 15pt,"\psi^r"]
            \\ {A}
                \arrow[r,"\Delta"']
            & {A \, \Box \, A}
        \end{tikzcd},\]
    \end{enumerate} satisfying the following coherence conditions:

    \begin{enumerate}
        \item [a.] Left Frobeniusator $\psi^l$ witnesses that $\Delta$ is left $A$-linear, i.e. we have
    \end{enumerate}

    \begin{equation} \label{eqn:LeftFrobAssociativity}
        \begin{tabular}{@{}ccc@{}}

        \stringdiagramfigure{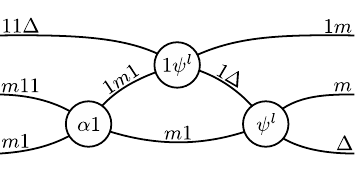} & \adjustbox{valign=c}{$=$} &
        \stringdiagramfigure{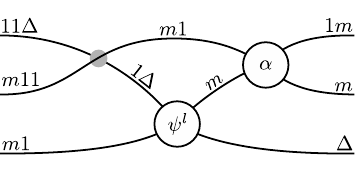}

        \end{tabular}
    \end{equation}

    \begin{enumerate}
        \item [] in $\mathbf{Hom}_\mathfrak{C}(A \, \Box \, A \, \Box \, A,A \, \Box \, A)$, and
    \end{enumerate}

    \begin{equation} \label{eqn:LeftFrobUnitality}
        \begin{tabular}{@{}ccc@{}}

        \stringdiagramfigure{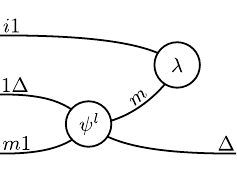} & \adjustbox{valign=c}{$=$} &
        \stringdiagramfigure{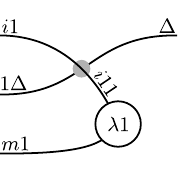}

        \end{tabular}
    \end{equation}

    \begin{enumerate}
        \item [] in $\mathbf{Hom}_\mathfrak{C}(A,A \, \Box \, A)$,

        \item [b.] Right Frobeniusator $\psi^r$ witnesses that $\Delta$ is right $A$-linear, i.e. we have
    \end{enumerate}

    \begin{equation} \label{eqn:RightFrobAssociativity}
        \begin{tabular}{@{}ccc@{}}

        \stringdiagramfigure{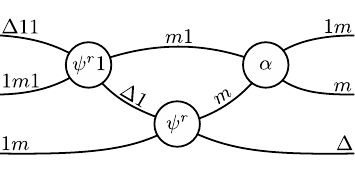} & \adjustbox{valign=c}{$=$} &
        \stringdiagramfigure{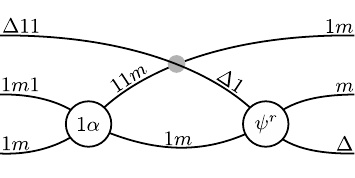}

        \end{tabular}
    \end{equation}

    \begin{enumerate}
        \item [] in $\mathbf{Hom}_\mathfrak{C}(A \, \Box \, A \, \Box \, A,A \, \Box \, A)$, and
    \end{enumerate}

    \begin{equation} \label{eqn:RightFrobUnitality}
        \begin{tabular}{@{}ccc@{}}

        \stringdiagramfigure{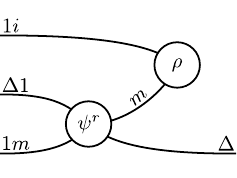} & \adjustbox{valign=c}{$=$} &
        \stringdiagramfigure{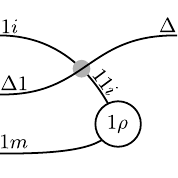}

        \end{tabular}
    \end{equation}

    \begin{enumerate}
        \item [] in $\mathbf{Hom}_\mathfrak{C}(A,A \, \Box \, A)$,

        \item [c.] $\Delta$ is $(A,A)$-bilinear, i.e. we have
    \end{enumerate}

    \begin{equation} \label{eqn:FrobBimod1Mor}
        \begin{tabular}{@{}ccc@{}}

        \stringdiagramfigure{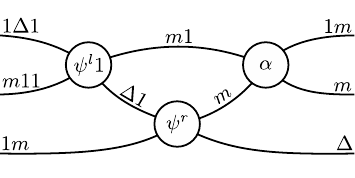} & \adjustbox{valign=c}{$=$} &
        \stringdiagramfigure{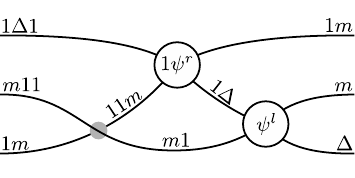}

        \end{tabular}
    \end{equation}

    \begin{enumerate}
        \item [] in $\mathbf{Hom}_\mathfrak{C}(A \, \Box \, A \, \Box \, A,A \, \Box \, A)$.
    \end{enumerate}
\end{Definition}

\begin{Notation}
    When there is ambiguity in distinguishing algebra and coalgebra structures in string diagram calculus, we add superscripts to Frobenius algebra data. For coalgebras, we use $\alpha_{\mathrm{coalg}}$, $\lambda_{\mathrm{coalg}}$, $\rho_{\mathrm{coalg}}$ to denote their coassociators and left and right counitors, respectively.
\end{Notation}

Here we record the additional comodule coherence conditions that the Frobeniusators impose on the multiplication $m$, which will be used in the constructions below.

\begin{Remark} \label{rmk:FrobeniusComoduleConditions}
    Let $A$ be a Frobenius algebra in $\mathfrak{C}$. The Frobeniusators $\psi^l$ and $\psi^r$ also equip the multiplication $m$ with an $(A,A)$-comodule 1-morphism structure. Concretely, the following coherence conditions hold:

    \begin{enumerate}
        \item [a.] We have the pentagon equation
    \end{enumerate}

    \begin{equation} \label{eqn:LeftFrobCoAssociativity}
        \begin{tabular}{@{}ccc@{}}

        \stringdiagramfigure{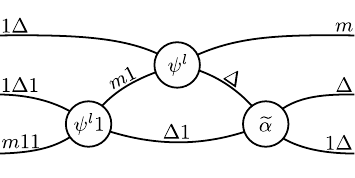} & \adjustbox{valign=c}{$=$} &
        \stringdiagramfigure{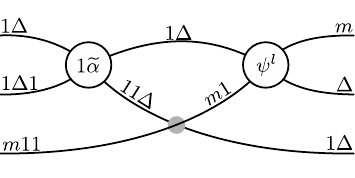}

        \end{tabular}
    \end{equation}

    \begin{enumerate}
        \item [] in $\mathbf{Hom}_\mathfrak{C}(A \, \Box \, A,A \, \Box \, A \, \Box \, A)$,

        \item [b.] We have the triangle equation
    \end{enumerate}

    \begin{equation} \label{eqn:LeftFrobCoUnitality}
        \begin{tabular}{@{}ccc@{}}

        \stringdiagramfigure{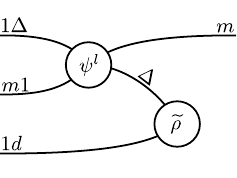} & \adjustbox{valign=c}{$=$} &
        \stringdiagramfigure{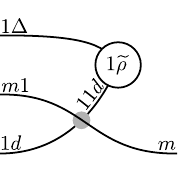}

        \end{tabular}
    \end{equation}

    \begin{enumerate}
        \item [] in $\mathbf{Hom}_\mathfrak{C}(A \, \Box \, A,A)$,

        \item [c.] We have the pentagon equation
    \end{enumerate}

    \begin{equation} \label{eqn:RightFrobCoAssociativity}
        \begin{tabular}{@{}ccc@{}}

        \stringdiagramfigure{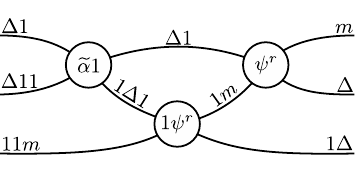} & \adjustbox{valign=c}{$=$} &
        \stringdiagramfigure{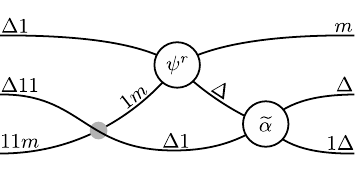}

        \end{tabular}
    \end{equation}

    \begin{enumerate}
        \item [] in $\mathbf{Hom}_\mathfrak{C}(A \, \Box \, A,A \, \Box \, A \, \Box \, A)$,

        \item [d.] We have the triangle equation
    \end{enumerate}

    \begin{equation} \label{eqn:RightFrobCoUnitality}
        \begin{tabular}{@{}ccc@{}}

        \stringdiagramfigure{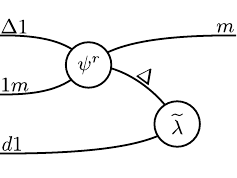} & \adjustbox{valign=c}{$=$} &
        \stringdiagramfigure{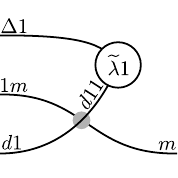}

        \end{tabular}
    \end{equation}

    \begin{enumerate}
        \item [] in $\mathbf{Hom}_\mathfrak{C}(A \, \Box \, A,A)$,

        \item [e.] We have the hexagon equation
    \end{enumerate}

    \begin{equation} \label{eqn:FrobBicomod1Mor}
        \begin{tabular}{@{}ccc@{}}

        \stringdiagramfigure{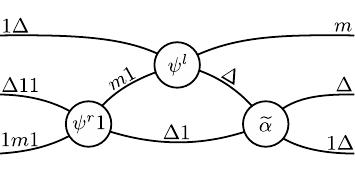} & \adjustbox{valign=c}{$=$} &
        \stringdiagramfigure{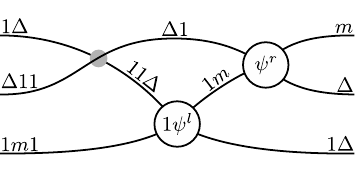}

        \end{tabular}
    \end{equation}

    \begin{enumerate}
        \item [] in $\mathbf{Hom}_\mathfrak{C}(A \, \Box \, A,A \, \Box \, A \, \Box \, A)$.
    \end{enumerate}
\end{Remark}

\subsection{Coherences for Frobenius Algebras}
In the 1-categorical setting, it is well-known that the collection of Frobenius algebra data and axioms can be reduced to a smaller collection, for examples, forgetting the comultiplication $\Delta$ or the counit $d$, see \cite{Kock}. In this section, we present a categorified version of this reduction, which shall be convenient for constructing Frobenius algebras in examples.

\begin{Construction}[Reconstruction of the comultiplication] \label{cstr:ComultiplicationReconstruction}
    Let $(A,m,i,\alpha,\lambda,\rho)$ be an algebra in $\mathfrak{C}$. Then there exists a unique extension to a Frobenius algebra when the counit $d \colon A \to \mathbf{I}$ is provided, together with an extension of the evaluation $e := d \circ m \colon A \, \Box \, A \to \mathbf{I}$ to a coherent self-duality datum on $A$ in $\mathfrak{C}$, i.e., there exists a coevaluation 1-morphism $c \colon \mathbf{I} \to A \, \Box \, A$ and 2-isomorphisms 
    \[ \begin{tikzcd}
            {A}
                \arrow[r,"c 1"]
                \arrow[d,equal]
            & { A\, \Box \, A \, \Box \, A}
                \arrow[d,"1 e"]
                \arrow[dl,Rightarrow,shorten <= 10pt,shorten >= 10pt,"E"]
            \\ {A}
                \arrow[r,equal]
            & {A}
        \end{tikzcd}, \begin{tikzcd}
            {A}
                \arrow[r,"1 c"]
                \arrow[d,equal]
            & { A \, \Box \, A \, \Box \, A}
                \arrow[d,"e 1"]
                \arrow[dl,Rightarrow,shorten <= 10pt,shorten >= 10pt,"F"]
            \\ {A}
                \arrow[r,equal]
            & {A}
        \end{tikzcd}\] satisfying the coherence conditions \eqref{eqn:CoherentDual1} and \eqref{eqn:CoherentDual2}.
    
    The comultiplication 1-morphism $\Delta \colon A \to A \, \Box \, A$ is defined as either of the following composites, 
    \[
       (\mathbf{1}_A \, \Box \, m) \circ (c \, \Box \, \mathbf{1}_A) \;\simeq\; \Delta \;\simeq\; (m \, \Box \, \mathbf{1}_A) \circ (\mathbf{1}_A \, \Box \, c),
    \]
    with the following canonical 2-isomorphism between the two composites:

    \begin{center}
        \stringdiagramfigure{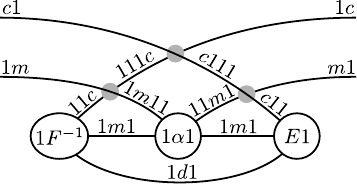} \quad \adjustbox{valign=c}{.}
    \end{center}

    The coassociator, left counitor and right counitor is defined as follows, respectively:

    \begin{center}
        \stringdiagramfigure{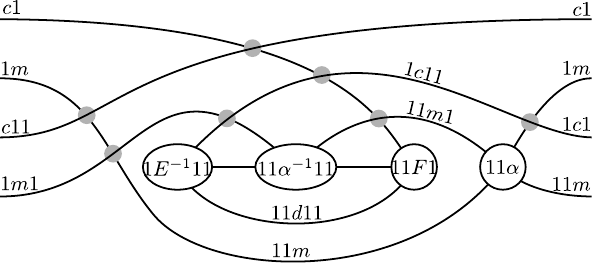}
    \end{center}
    
    \begin{center}
        \stringdiagramfigure{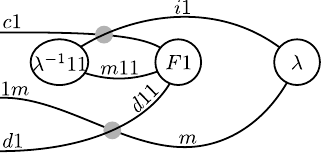} \hspace*{1cm} \stringdiagramfigure{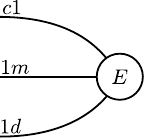} \quad \adjustbox{valign=c}{.}
    \end{center}

    The left and right Frobeniusators are defined as follows, respectively:

    \begin{center}
        \stringdiagramfigure{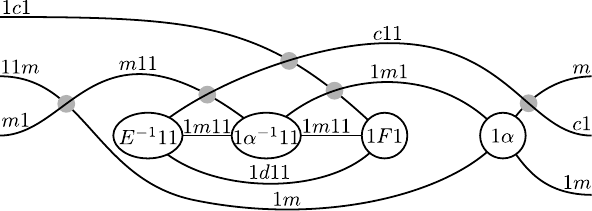}
    \end{center}

    \begin{center} 
        \stringdiagramfigure{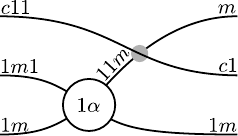} \quad \adjustbox{valign=c}{.}
    \end{center}

    Moreover, the unit $i$ is compatible with the counit $d$ via the coevaluation $c$:

    \begin{center}
        \stringdiagramfigure{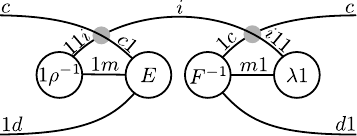} \quad \adjustbox{valign=c}{.}
    \end{center}
\end{Construction}

\begin{Construction}[Reconstruction of the counit] \label{cstr:CounitReconstruction}
    Let $(A,m,i,\alpha,\lambda,\rho)$ be an algebra in $\mathfrak{C}$. Then there exists a unique extension to a Frobenius algebra when the comultiplication $\Delta \colon A \to A \, \Box \, A$ is provided, together with:
    \begin{itemize}[nosep]
        \item Left Frobeniusator $\psi^l$ and right Frobeniusator $\psi^r$, which satisfy the coherence conditions \eqref{eqn:LeftFrobAssociativity}, \eqref{eqn:LeftFrobUnitality}, \eqref{eqn:RightFrobAssociativity}, \eqref{eqn:RightFrobUnitality}, \eqref{eqn:FrobBimod1Mor};
        
        \item An extension of the coevaluation $c := \Delta \circ i \colon \mathbf{I} \to A \, \Box \, A$ to a coherent self-duality datum on $A$ in $\mathfrak{C}$, i.e., there exists an evaluation 1-morphism $e \colon A \, \Box \, A \to \mathbf{I}$ and 2-isomorphisms
        \[ \begin{tikzcd}
            {A}
                \arrow[r,"c 1"]
                \arrow[d,equal]
            & { A\, \Box \, A \, \Box \, A}
                \arrow[d,"1 e"]
                \arrow[dl,Rightarrow,shorten <= 10pt,shorten >= 10pt,"E"]
            \\ {A}
                \arrow[r,equal]
            & {A}
        \end{tikzcd}, \begin{tikzcd}
            {A}
                \arrow[r,"1 c"]
                \arrow[d,equal]
            & { A \, \Box \, A \, \Box \, A}
                \arrow[d,"e 1"]
                \arrow[dl,Rightarrow,shorten <= 10pt,shorten >= 10pt,"F"]
            \\ {A}
                \arrow[r,equal]
            & {A}
        \end{tikzcd}\] satisfying the coherence conditions \eqref{eqn:CoherentDual1} and \eqref{eqn:CoherentDual2};

        \item An $A$-balanced 1-morphism structure on the evaluation $e$, i.e. a 2-isomorphism
        \[\begin{tikzcd}
            {A \, \Box \, A \, \Box \, A}
                \arrow[r,"m 1"]
                \arrow[d,"1 m"']
            & {A \, \Box \, A}
                \arrow[d,"e"]
                \arrow[ld,Rightarrow,shorten <= 15pt,shorten >= 15pt,"\tau"]
            \\ {A \, \Box \, A}
                \arrow[r,"e"']
            & {\mathbf{I}}
        \end{tikzcd}\] satisfying the coherence conditions \eqref{eqn:Balanced1MorAssociativity} and \eqref{eqn:Balanced1MorUnitality}.
    \end{itemize}  
    
    The counit $d \colon A \to \mathbf{I}$ is defined as either of the following composites,
    \[e \circ (i \, \Box \, \mathbf{1}_A) \simeq d \simeq e \circ (\mathbf{1}_A \, \Box \, i),\]
    with the following canonical 2-isomorphism between the two composites:

    \begin{center}
        \stringdiagramfigure{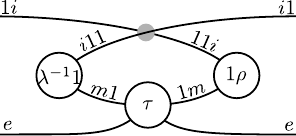} \quad \adjustbox{valign=c}{.}
    \end{center}

    Moreover, the comultiplication $\Delta$ is compatible with the evaluation $e$ via the following 2-isomorphism:

    \begin{center}
        \stringdiagramfigure{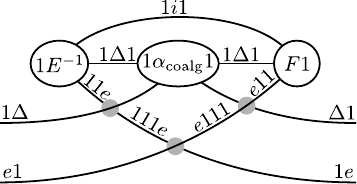} \quad \adjustbox{valign=c}{,}
    \end{center}
    which is further compatible with the multiplication $m$ via the following 2-isomorphism:
    \begin{center}
        \stringdiagramfigure{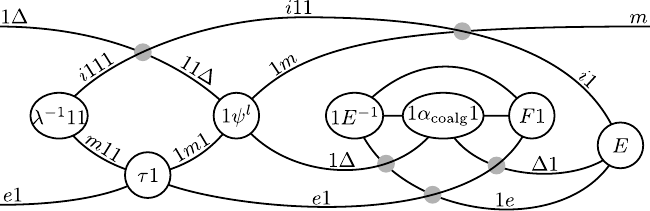} \quad \adjustbox{valign=c}{.}
    \end{center}

    The left and right counitors are defined as follows, respectively:

    \begin{center}
        \stringdiagramfigure{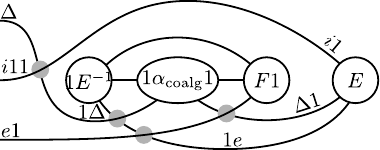}
    \end{center}

    \begin{center}
        \stringdiagramfigure{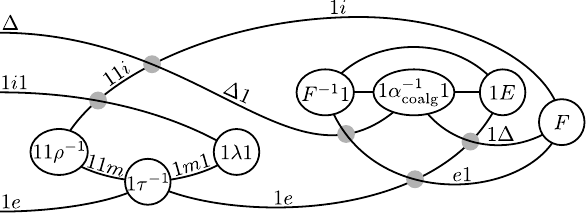} \quad \adjustbox{valign=c}{.}
    \end{center}

\end{Construction}

\subsection{Rigid Algebras}

\begin{Definition} \label{def:RigidAlgebra}
    A \textit{rigid algebra} in $\mathfrak{C}$ consists of:
    \begin{enumerate}
        \item An underlying algebra $(A,m,i,\alpha,\lambda,\rho)$ in $\mathfrak{C}$;

        \item A 1-morphism $\Delta: A \to A \, \Box \, A$ which is an $(A,A)$-bilinear right adjoint to $m$;

        \item More explicitly, one has 2-morphisms, unit $\eta$ and counit $\epsilon$: \[\begin{tikzcd}
            {A}
                \arrow[r,"\Delta"]
                \arrow[d,equal]
            & {A \, \Box \, A}
                \arrow[d, "m"]
                \arrow[r,equal]
                \arrow[ld,Rightarrow,shorten <= 10pt,shorten >= 10pt,"\epsilon"]
            & {A \, \Box \, A}
                \arrow[d,equal]
                \arrow[ld,Rightarrow,shorten <= 10pt,shorten >= 10pt,"\eta"]
            \\ {A}
                \arrow[r,equal]
            & {A}
                \arrow[r,"\Delta"']
            & {A \, \Box \, A}
        \end{tikzcd},\]
    \end{enumerate} satisfying the following coherence conditions:

    \begin{enumerate}
        \item [a.] We have the zigzag equations in $\mathbf{Hom}_\mathfrak{C}(A,A \, \Box \, A)$:
    \end{enumerate}

    \begin{equation} \label{eqn:Zigzag1}
        \begin{tabular}{@{}cccc@{}}

        \adjustbox{valign=c}{$\mathbf{1}_\Delta$} & \adjustbox{valign=c}{$=$} &
        \stringdiagramfigure{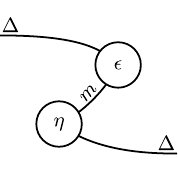} & \adjustbox{valign=c}{,}

        \end{tabular}
    \end{equation}

    \begin{enumerate}
        \item [] and in $\mathbf{Hom}_\mathfrak{C}(A \, \Box \, A,A)$:
    \end{enumerate}

    \begin{equation} \label{eqn:Zigzag2}
        \begin{tabular}{@{}cccc@{}}

        \adjustbox{valign=c}{$\mathbf{1}_m$} & \adjustbox{valign=c}{$=$} &
        \stringdiagramfigure{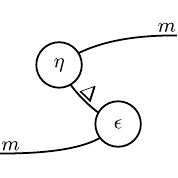} & \adjustbox{valign=c}{,}

        \end{tabular}
    \end{equation}

    \begin{enumerate}
        \item [b.] Counit $\epsilon$ is $(A,A)$-bilinear, i.e. we have
    \end{enumerate}

    \begin{equation} \label{eqn:CounitLeftMod2Mor}
        \begin{tabular}{@{}ccc@{}}

        \stringdiagramfigure{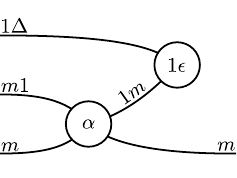} & \adjustbox{valign=c}{$=$} &
        \stringdiagramfigure{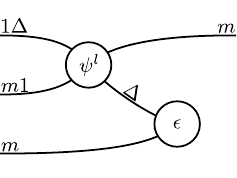}

        \end{tabular}
    \end{equation}

    \begin{enumerate}
        \item [] in $\mathbf{Hom}_\mathfrak{C}(A \, \Box \, A,A)$, and
    \end{enumerate}

    \begin{equation} \label{eqn:CounitRightMod2Mor}
        \begin{tabular}{@{}ccc@{}}

        \stringdiagramfigure{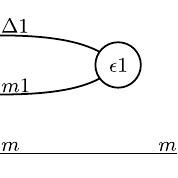} & \adjustbox{valign=c}{$=$} &
        \stringdiagramfigure{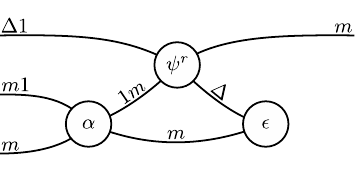}

        \end{tabular}
    \end{equation}

    \begin{enumerate}
        \item [] in $\mathbf{Hom}_\mathfrak{C}(A \, \Box \, A,A)$,

        \item [c.] Unit $\eta$ is $(A,A)$-bilinear, i.e. we have
    \end{enumerate}

    \begin{equation} \label{eqn:UnitLeftMod2Mor}
        \begin{tabular}{@{}ccc@{}}

        \stringdiagramfigure{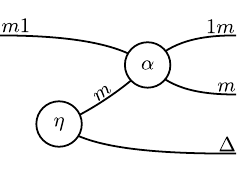} & \adjustbox{valign=c}{$=$} &
        \stringdiagramfigure{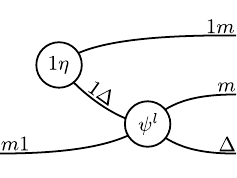}

        \end{tabular}
    \end{equation}

    \begin{enumerate}
        \item [] in $\mathbf{Hom}_\mathfrak{C}(A \, \Box \, A \, \Box \, A,A \, \Box \, A)$, and
    \end{enumerate}

    \begin{equation} \label{eqn:UnitRightMod2Mor}
        \begin{tabular}{@{}ccc@{}}

        \stringdiagramfigure{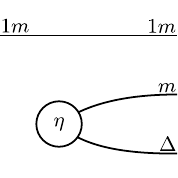} & \adjustbox{valign=c}{$=$} &
        \stringdiagramfigure{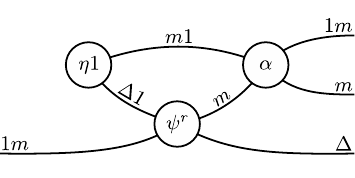}

        \end{tabular}
    \end{equation}

    \begin{enumerate}
        \item [] in $\mathbf{Hom}_\mathfrak{C}(A \, \Box \, A \, \Box \, A,A \, \Box \, A)$.
    \end{enumerate}
\end{Definition}

\begin{Remark}
    When the context is clear, we omit the labels for units and counits of adjoint 1-morphisms in 2-categories. In string diagram calculus, these are simply represented by a cap or a cup, with their zigzag equations corresponding to the straightening of strings.
\end{Remark}

\begin{Lemma} \label{lem:RigidIsFrobenius}
    A rigid algebra $A$ in $\mathfrak{C}$ is a Frobenius algebra, with the same underlying algebra structure and with the coalgebra structure $(A, m^R, i^R, \alpha^R, \lambda^R, \rho^R)$ given by the right adjoint of the algebra structure.
\end{Lemma}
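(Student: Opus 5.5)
The plan is to transport the algebra structure of $A$ along right adjunctions (the "doctrinal adjunction" / mates principle) and then read off the Frobeniusators from the bilinearity of $\Delta = m^R$. Throughout we work in a semistrict monoidal 2-category, where $- \Box -$ is a 2-functor and hence preserves adjunctions. This gives adjunctions $m \Box 1 \dashv m^R \Box 1$ and $1 \Box m \dashv 1 \Box m^R$.

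\textbf{Step 1: the coalgebra structure.} The unit $i : \mathbf{I} \to A$ needs a right adjoint $i^R$. Since only $m$ is assumed to have a right adjoint in Definition~\ref{def:RigidAlgebra}, I would produce $i^R$ from the bilinear right adjoint $\Delta$. Set
\[ i^R := \lambda\text{-twisted composite } \mathbf{I}\text{-valued}\ \ A \xrightarrow{\Delta} A \Box A \xrightarrow{?} \cdots \]
Concretely, the standard argument in the 1-categorical case shows that the right adjoint of $i$ is obtained as a mate. A cleaner route is to check that $\Delta$ admits a counit making it a coalgebra, with $d$ defined as the mate of the unitors. I would take as definition
\[ i^R := \text{the counit } d \text{ obtained by applying } (-)^R \text{ to } \lambda,\rho \text{ relative to } m \dashv \Delta \text{ and } i\Box 1. \]
Equivalently, one proves first that in a rigid algebra $i$ automatically has a right adjoint, namely $A \xrightarrow{\Delta} A\Box A \to \cdots$. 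Here I am uncertain of the paper's exact conventions and would follow its implicit assumption that $i^R$ exists.

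With $i^R$ in hand, take mates of the algebra coherence cells:
\begin{itemize}
\item $\alpha^R$ is the inverse mate of $\alpha$ under $m(m\Box1) \dashv (m^R\Box1)m^R$ and $m(1\Box m) \dashv (1\Box m^R)m^R$.
\item $\lambda^R$ and $\rho^R$ are obtained similarly from $\lambda$ and $\rho$.
\end{itemize}
Invertibility is preserved by mates. The mate correspondence is functorial, compatible with pasting and with whiskering by $\Box$. It therefore carries the pentagon \eqref{eqn:AlgebraAssociativity} and triangle \eqref{eqn:AlgebraUnitality} to the coalgebra pentagon \eqref{eqn:CoalgebraAssociativity} and triangle \eqref{eqn:CoalgebraUnitality}. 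In string diagrams this amounts to rotating each diagram by $\pi$ and straightening caps and cups.

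\textbf{Step 2: the Frobeniusators.} By assumption $\Delta = m^R$ carries an $(A,A)$-bilinear structure; call its 2-isomorphisms $\psi^l, \psi^r$. These are exactly the data of item 4 in Definition~\ref{def:FrobeniusAlgebra}. Conditions \eqref{eqn:LeftFrobAssociativity}–\eqref{eqn:FrobBimod1Mor} are exactly the statement that $(\Delta,\psi^l,\psi^r)$ is an $(A,A)$-bimodule 1-morphism (Definitions~\ref{def:LeftModule1Morphism}, \ref{def:RightModule1Morphism}, \ref{def:Bimodule1Morphism}), once $A\Box A$ and $A$ carry their regular bimodule structures. So these axioms are immediate.

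\textbf{Main obstacle.} The delicate point is the precise relationship between the given bilinear structure on $\Delta$ and the mate of the module structure on $m$. The paper's Frobenius definition demands no compatibility between $\psi^{l,r}$ and the coalgebra cells, so none needs checking. The remaining hard point is the existence of $i^R$ and the verification of the counit triangle. For this I would write $i^R \simeq i^R$ via the adjunction composite $A \xrightarrow{\Delta} A\Box A \xrightarrow{1\Box i^R}$. I would then check the triangle by pasting the zigzags \eqref{eqn:Zigzag1} and \eqref{eqn:Zigzag2} with the algebra triangle.
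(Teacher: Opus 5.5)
Your Step 2 and the mate argument for $\alpha^R$ are the paper's own proof, which is the one-line remark that the lemma follows from the definition of a rigid algebra and functoriality of taking right adjoints. The genuine problem is $i^R$, and none of your attempts produces it:
\begin{itemize}
\item a mate of $\lambda$ or $\rho$ can only be formed once $i\Box 1$ and $1\Box i$ already have right adjoints, and a mate is a 2-cell, not a 1-morphism $A\to\mathbf{I}$;
\item the composite $(1\Box i^R)\circ\Delta$ presupposes $i^R$, and, being the right adjoint of $m\circ(1\Box i)\simeq\mathbf{1}_A$, it only recovers a counitor;
\item the open-ended composite $A\xrightarrow{\Delta}A\Box A\to\cdots$ cannot be completed, since none of the data $m$, $i$, $\Delta$ of a rigid algebra has target $\mathbf{I}$.
\end{itemize}
In fact your claim that $i$ automatically acquires a right adjoint is false at the generality of Definition~\ref{def:RigidAlgebra}. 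Work in the locally discrete 2-category on a strictification of $(\mathbf{Ab},\otimes,\mathbb{Z})$ and take the ring $A=\mathbb{Q}$. Its multiplication $\mathbb{Q}\otimes\mathbb{Q}\to\mathbb{Q}$ is invertible, so $\Delta=m^{-1}$ is an $(A,A)$-bilinear right adjoint of $m$ with identity unit and counit, and $A$ is rigid. But $\mathrm{Hom}(\mathbb{Q},\mathbb{Z})=0$, so $i$ has no right adjoint and no $d$ with $(d\Box 1)\circ\Delta=\mathbf{1}_A$ exists. Thus $A$ is not even Frobenius.

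So the existence of $i^R$ has to be a hypothesis. The statement presupposes it through the notation $(A,m^R,i^R,\ldots)$, and the paper's proof uses it tacitly. It holds automatically in $\mathbf{2Vect}$, in fusion 2-categories, or whenever all 1-morphisms have right adjoints. State it explicitly and discard the attempted constructions.

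The counit is then not a separate hard point. Take $\lambda^R$ and $\rho^R$ to be the mates of $\lambda$ and $\rho$ with respect to the adjunctions
\[ m\circ(i\Box 1)\dashv(i^R\Box 1)\circ\Delta,\qquad m\circ(1\Box i)\dashv(1\Box i^R)\circ\Delta,\qquad \mathbf{1}_A\dashv\mathbf{1}_A. \]
Then \eqref{eqn:CoalgebraUnitality} is the mate of \eqref{eqn:AlgebraUnitality}, just as \eqref{eqn:CoalgebraAssociativity} is the mate of \eqref{eqn:AlgebraAssociativity}.

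For the pentagon, add one item to your list of properties of the mate correspondence. In a semistrict monoidal 2-category the pentagon contains the interchanger between $m\Box 1\Box 1$ and $1\Box 1\Box m$, which is not a whiskering. You therefore also need that the mate of the interchanger of $f$ and $g$ is the interchanger of $f^R$ and $g^R$. This follows from naturality of the interchanger together with the zigzag identities.
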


\begin{proof}
    This follows immediately from the definition of rigid algebras and functoriality of taking right adjoints in 2-categories.
\end{proof}

We record the additional comodule structure inherited from the adjunction.

\begin{Lemma}
    Suppose $A$ is a rigid algebra in $\mathfrak{C}$. Then $\Delta:A \to A \, \Box \, A$ is coassociative, i.e.\ the right adjoint of the associator $\alpha$ provides a coassociator $\widetilde{\alpha}$ satisfying equation~(\ref{eqn:CoalgebraAssociativity}).
\end{Lemma}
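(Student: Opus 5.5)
We define the coassociator $\widetilde{\alpha}$ as the mate of $\alpha^{-1}$ under the adjunctions $m \dashv \Delta$, $m1 \dashv \Delta 1$ and $1m \dashv 1\Delta$. We then verify the pentagon equation \eqref{eqn:CoalgebraAssociativity} by taking mates of the algebra pentagon \eqref{eqn:AlgebraAssociativity}.

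First, whiskering the adjunction data $(\eta,\epsilon)$ of Definition~\ref{def:RigidAlgebra} by identities gives adjunctions $m\,\Box\,\mathbf{1}_A \dashv \Delta\,\Box\,\mathbf{1}_A$ and $\mathbf{1}_A\,\Box\,m \dashv \mathbf{1}_A\,\Box\,\Delta$. Because $\mathfrak{C}$ is semistrict, the interchange 2-isomorphisms are identities on these whiskerings, so the zigzag equations \eqref{eqn:Zigzag1}, \eqref{eqn:Zigzag2} persist. Composites of adjunctions are adjunctions, so $m\circ(m1)\dashv(\Delta1)\circ\Delta$ and $m\circ(1m)\dashv(1\Delta)\circ\Delta$. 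The associator $\alpha\colon m\circ(m1)\Rightarrow m\circ(1m)$ then has a mate $\alpha^R\colon(1\Delta)\circ\Delta\Rightarrow(\Delta1)\circ\Delta$. In string diagrams, $\alpha^R$ is $\alpha$ rotated by $180^\circ$: we bend the input strings up with cups $\eta$ and the output strings down with caps $\epsilon$. Taking mates is functorial and preserves invertibility, so $\alpha^R$ is a 2-isomorphism. We set $\widetilde{\alpha}$ to be $\alpha^R$ or its inverse, whichever matches the orientation drawn in Definition~\ref{def:Coalgebra}.

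Second, we use the fact that the mate correspondence is a bijection compatible with vertical composition, with whiskering by 1-morphisms, and with the monoidal product of 2-cells (again because $\mathfrak{C}$ is semistrict). Both sides of the pentagon \eqref{eqn:AlgebraAssociativity} are 2-cells between composites of four $m$'s, built from $\alpha$ whiskered by identities and $\Box$-products. The mate of each side is therefore the corresponding composite of mates, $\alpha^R$ whiskered by identities. These are exactly the two sides of \eqref{eqn:CoalgebraAssociativity}, with the composition order reversed and $\Delta$ in place of $m$. Applying the mate bijection to \eqref{eqn:AlgebraAssociativity} yields \eqref{eqn:CoalgebraAssociativity}.

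The main obstacle is the bookkeeping in this second step. We must check that the mate of a whiskered cell such as $(\alpha\,\Box\,\mathbf{1}_A)\circ_1(\ldots)$ is the correspondingly whiskered $\alpha^R$. This requires cancelling the cup–cap pairs that appear between adjacent factors using \eqref{eqn:Zigzag1} and \eqref{eqn:Zigzag2}. Semistrictness is needed here, so that $\Box$ of 1-morphisms is strictly functorial and interchangers do not enter. We also must check that the composite adjunction for the four-fold composites is independent of the bracketing, which is where interchangers could otherwise appear. The plan is to do the check in the graphical calculus: insert zigzag identities between the four $m$-vertices of each side and isotope the diagram. The bilinearity conditions \eqref{eqn:CounitLeftMod2Mor}–\eqref{eqn:UnitRightMod2Mor} are not needed for coassociativity. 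They matter only for the Frobeniusators in Lemma~\ref{lem:RigidIsFrobenius}.
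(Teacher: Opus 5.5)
Your plan is the argument the paper intends: define $\widetilde{\alpha}$ as the mate of $\alpha^{-1}$ and transport \eqref{eqn:AlgebraAssociativity} to \eqref{eqn:CoalgebraAssociativity} through the mate bijection. The paper gives no separate proof and only appeals to functoriality of right adjoints, as in Lemma~\ref{lem:RigidIsFrobenius}. Your orientation is correct: the mate of $\alpha^{-1}$ is $(\alpha^R)^{-1}\colon(\Delta 1)\circ\Delta\Rightarrow(1\Delta)\circ\Delta$.

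The gap is in how you handle interchangers. In a semistrict monoidal 2-category in the sense used here (a Gray monoid), $\Box$ is \emph{not} strictly functorial in two variables. The interchanger $\phi_{f,g}\colon (f\,\Box\,\mathbf{1})\circ(\mathbf{1}\,\Box\,g)\Rightarrow(\mathbf{1}\,\Box\,g)\circ(f\,\Box\,\mathbf{1})$ is precisely the coherence 2-isomorphism that is not an identity. It enters \eqref{eqn:AlgebraAssociativity} because the vertex $(ab)(cd)$ of the pentagon has two representatives, $m\circ(m1)\circ(11m)$ and $m\circ(1m)\circ(m11)$. The short side therefore contains $m\circ_1\phi_{m,m}^{\pm1}$ between $\alpha\circ_1(m11)$ and $\alpha\circ_1(11m)$. (Also, each side is a 2-cell between composites of three copies of $m$, not four.) So it is false that both sides are built only from whiskerings of $\alpha$. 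After taking mates, one side contains the mate of $\phi_{m,m}$ with respect to $(m1)(11m)\dashv(11\Delta)(\Delta1)$ and $(1m)(m11)\dashv(\Delta11)(1\Delta)$, and your argument never identifies it. Your stated reason this cannot happen (that semistrictness makes $\Box$ strictly functorial) is incorrect. Conversely, the difficulty you do flag, that composite adjunctions might depend on bracketing, is vacuous, since the underlying 2-category is strict.

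The missing ingredient is the following lemma. For adjunctions $f\dashv f^R$ and $g\dashv g^R$, the mate of $\phi_{f,g}$ under the product adjunctions is $\phi_{f^R,g^R}$. In particular the mate of $\phi_{m,m}$ is $\phi_{\Delta,\Delta}$, the interchanger that appears in \eqref{eqn:CoalgebraAssociativity}. You prove it one variable at a time:
\begin{enumerate}
\item Use naturality of $\phi_{f,-}$ along $\eta\colon\mathbf{1}\Rightarrow g^Rg$, the identity $\phi_{f,\mathbf{1}}=\mathrm{id}$, and the axiom expressing $\phi_{f,g^Rg}$ as the pasting of $\phi_{f,g^R}$ and $\phi_{f,g}$.
\item One zigzag then shows that the mate of $\phi_{f,g}^{-1}$ in the $g$-variable is $\phi_{f,g^R}$.
\item The symmetric argument along $\eta\colon\mathbf{1}\Rightarrow f^Rf$ shows that the mate of $\phi_{f,g}$ in the $f$-variable is $\phi_{f^R,g}^{-1}$.
\item Composing the two one-variable mates gives the claim.
\end{enumerate}
With this lemma added, the rest of your argument goes through. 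The whiskerings by $-\,\Box\,\mathbf{1}_A$ and $\mathbf{1}_A\,\Box\,-$ are fine as you treat them, since these are strict 2-functors. The mate of each side of \eqref{eqn:AlgebraAssociativity} is then the corresponding side of \eqref{eqn:CoalgebraAssociativity} with composition order reversed, and inverting gives the pentagon for $\widetilde{\alpha}$.
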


\begin{Lemma}
    Suppose $A$ is a rigid algebra in $\mathfrak{C}$. Then the comultiplication $\Delta$ with coassociator $\widetilde{\alpha}$ satisfies equations~(\ref{eqn:LeftFrobCoAssociativity}), (\ref{eqn:RightFrobCoAssociativity}), and (\ref{eqn:FrobBicomod1Mor}).
\end{Lemma}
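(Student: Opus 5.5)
The plan is to derive each of the three comodule coherences by \emph{mating}: transport the corresponding known equation for the algebra and its bimodule structure across the adjunction $m \dashv \Delta$. The Frobeniusators of a rigid algebra are, by Lemma~\ref{lem:RigidIsFrobenius}, the mates of the $(A,A)$-bilinear structure maps on $m$ along this adjunction. The left $A$-linearity of $m$ is given by the associator $\alpha$, viewing $m$ as a left module map $A\,\Box\,A \to A$. Concretely, I take $\psi^l$ to be the composite
\[
\Delta \circ m \;\Rightarrow\; (m\,\Box\,1)\circ(1\,\Box\,\Delta)
\]
built from the unit $\eta$ on $1\,\Box\,m$, the inverse associator $\alpha^{-1}$, and the counit $\epsilon$. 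Then $\psi^r$ is defined symmetrically. Likewise, I take the coassociator $\widetilde{\alpha}$ to be the mate of $\alpha$ under the composite adjunctions $m\circ(m\,\Box\,1) \dashv (\Delta\,\Box\,1)\circ\Delta$ and $m\circ(1\,\Box\,m)\dashv(1\,\Box\,\Delta)\circ\Delta$.

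\textbf{Pentagon (\ref{eqn:LeftFrobCoAssociativity}).} Both sides are 2-morphisms in $\mathbf{Hom}_\mathfrak{C}(A\,\Box\,A, A\,\Box\,A\,\Box\,A)$. Each is built from one instance of $\psi^l$, one of $\widetilde{\alpha}$, and interchange isomorphisms. I expand every $\psi^l$ and $\widetilde{\alpha}$ into its cups, caps and an $\alpha$ or $\alpha^{-1}$. I then use the zigzag equations (\ref{eqn:Zigzag1}) and (\ref{eqn:Zigzag2}) to cancel the cup--cap pairs between adjacent mates. After this cancellation, each side becomes the mate, under the adjunction relating $m\circ(m\,\Box\,1)\circ(1\,\Box\,1\,\Box\,\dots)$-type composites, of a 2-morphism built purely from associators $\alpha$ on $A^{\Box 4}$. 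The two associator composites agree by the algebra pentagon (\ref{eqn:AlgebraAssociativity}). Since taking mates is a bijection compatible with pasting, the two sides agree.

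\textbf{The other two equations.} The right-handed pentagon (\ref{eqn:RightFrobCoAssociativity}) follows by the mirror argument, using $\psi^r$. For the hexagon (\ref{eqn:FrobBicomod1Mor}), both $\psi^l$ and $\psi^r$ appear. After cancelling zigzags, it reduces to a pasting of associators that is again an instance of (\ref{eqn:AlgebraAssociativity}), namely the pentagon expressing that $\alpha$ is compatible with both module structures on $A\,\Box\,A$. Alternatively, this equation is the mate of the bilinearity condition (\ref{eqn:FrobBimod1Mor}), which holds by Lemma~\ref{lem:RigidIsFrobenius}.

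\textbf{Main obstacle.} The hard part is the bookkeeping of which adjunction each mate is taken with respect to. Sometimes the adjunction is $m\,\Box\,1\dashv\Delta\,\Box\,1$, sometimes $1\,\Box\,m\dashv 1\,\Box\,\Delta$, and sometimes a composite of these. I must check that the interchange 2-cells of the semistrict structure commute with mating. My first step is to prove a small lemma stating that mating sends horizontal pasting to horizontal pasting and whiskering by $1\,\Box\,-$ or $-\,\Box\,1$ to whiskering. This holds because $\Box$ is a 2-functor in each variable, so it preserves adjunctions, units and counits. With this lemma, every step becomes a straightening of strings.
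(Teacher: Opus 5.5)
The paper states this lemma without proof. The intended argument is your mate/doctrinal-adjunction argument: compare ``functoriality of taking right adjoints'' in the proof of Lemma~\ref{lem:RigidIsFrobenius}. The plan of bending all strings and landing on \eqref{eqn:AlgebraAssociativity} is sound. But the one explicit formula you give is wrong, and it is wrong exactly where the argument is delicate. In the paper $\psi^l\colon(m\,\Box\,1)(1\,\Box\,\Delta)\Rightarrow\Delta m$ (see also the proof of Proposition~\ref{prop:Frobenius_algebra_in_2Vect_is_rigid}), and as a mate it is
\[
(m\,\Box\,1)(1\,\Box\,\Delta)\overset{\eta}{\Longrightarrow}\Delta m\,(m\,\Box\,1)(1\,\Box\,\Delta)\overset{\Delta\alpha}{\Longrightarrow}\Delta m\,(1\,\Box\,m)(1\,\Box\,\Delta)\overset{1\,\Box\,\epsilon}{\Longrightarrow}\Delta m .
\]
This uses the unit of $m\dashv\Delta$ on $A\,\Box\,A$, the associator $\alpha$ (not $\alpha^{-1}$), and the counit of $1\,\Box\,m\dashv 1\,\Box\,\Delta$. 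In doctrinal adjunction the structure on the right adjoint is the mate of the \emph{inverse} of the structure on the left adjoint. Symmetrically, $\psi^r$ is the mate of $\alpha^{-1}$.

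What you wrote is a cell $\Delta m\Rightarrow(m\,\Box\,1)(1\,\Box\,\Delta)$ built from a unit $\mathbf{1}_{A\Box A}\Rightarrow(1\,\Box\,m)(1\,\Box\,\Delta)$, $\alpha^{-1}$, and a counit $\Delta m\Rightarrow\mathbf{1}$. That is the mate for the opposite adjunction $\Delta\dashv m$, which a rigid algebra does not have; a cell $\mathbf{1}_A\Rightarrow m\circ\Delta$ is precisely the special-Frobenius datum $\gamma$. With only $m\dashv\Delta$, none of the four boundary strings of a cell $\Delta m\Rightarrow(m\,\Box\,1)(1\,\Box\,\Delta)$ can be bent, so $(\psi^l)^{-1}$ is not a cups-and-caps expression at all.

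Your expansion step therefore works only because, in all three equations, the Frobeniusators occur in the forward direction, from ``coact, then $m$'' to ``$m$, then coact''. This does hold, since these are comodule-1-morphism axioms, i.e.\ module axioms in $\mathfrak{C}^{1op}$, where no 2-cell is reversed. But it is the thing that must be checked. For $\widetilde{\alpha}$ the analogous direction slip is harmless: both ends are full adjunctions, so mating is functorial, and the mate of $\alpha$ is just $\widetilde{\alpha}^{-1}$.

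Smaller points:
\begin{enumerate}
\item In Definition~\ref{def:RigidAlgebra} the Frobeniusators are given data (the bilinear structure of $\Delta$), so you cannot simply take them to be mates. The identification comes from whiskering \eqref{eqn:CounitLeftMod2Mor} (resp.\ \eqref{eqn:CounitRightMod2Mor}) by $\Delta$, precomposing with $\eta$, and applying \eqref{eqn:Zigzag1}.
\item A comodule pentagon contains the Frobeniusator three times (twice on one side, once on the other) and the coassociator twice (once per side, one copy whiskered), not one of each. For the $\psi^l$-pentagon, after bending, both sides become 2-cells $m(1\,\Box\,m)(m\,\Box\,1\,\Box\,1)\Rightarrow m(1\,\Box\,m)(1\,\Box\,m\,\Box\,1)$, i.e.\ $(ab)(cd)\Rightarrow a((bc)d)$. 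These are the long and the short way round Mac Lane's pentagon, so \eqref{eqn:AlgebraAssociativity} does finish it.
\item Your ``alternative'' for \eqref{eqn:FrobBicomod1Mor} is imprecise. After bending, \eqref{eqn:FrobBimod1Mor} becomes an equation between 2-cells out of the bracketing $(ab)(cd)$, while \eqref{eqn:FrobBicomod1Mor} becomes one between 2-cells into it. So they are related by mating plus inversion, not by mating alone.
\item Commuting mates past interchangers uses naturality of the Gray interchanger in both variables, not just 2-functoriality of $1\,\Box\,-$ and $-\,\Box\,1$.
\end{enumerate}
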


\begin{Remark}
    Suppose $A$ is a rigid algebra in $\mathfrak{C}$. Then the adjunction between $\Delta$ and $m$ also carries an $(A,A)$-comodule structure with Frobeniusators $\psi^l$ and $\psi^r$, giving rise to the additional coherence conditions~(\ref{eqn:CounitLeftComod2Mor})--(\ref{eqn:UnitRightComod2Mor}) below.

    \begin{enumerate}
        \item [a.] We have
    \end{enumerate}

    \begin{equation} \label{eqn:CounitLeftComod2Mor}
        \begin{tabular}{@{}ccc@{}}

        \stringdiagramfigure{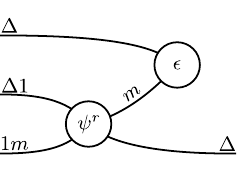} & \adjustbox{valign=c}{$=$} &
        \stringdiagramfigure{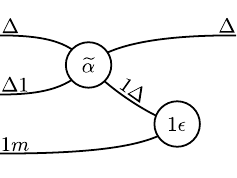}

        \end{tabular}
    \end{equation}

    \begin{enumerate}
        \item [] in $\mathbf{Hom}_\mathfrak{C}(A,A \, \Box \, A)$,

        \item [b.] We have
    \end{enumerate}

    \begin{equation} \label{eqn:CounitRightComod2Mor}
        \begin{tabular}{@{}ccc@{}}

        \stringdiagramfigure{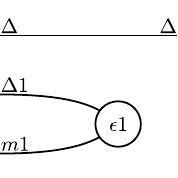} & \adjustbox{valign=c}{$=$} &
        \stringdiagramfigure{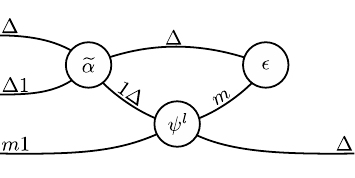}

        \end{tabular}
    \end{equation}

    \begin{enumerate}
        \item [] in $\mathbf{Hom}_\mathfrak{C}(A,A \, \Box \, A)$,

        \item [c.] We have
    \end{enumerate}

    \begin{equation} \label{eqn:UnitLeftComod2Mor}
        \begin{tabular}{@{}ccc@{}}

        \stringdiagramfigure{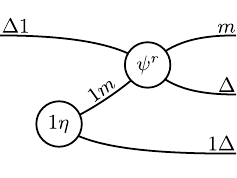} & \adjustbox{valign=c}{$=$} &
        \stringdiagramfigure{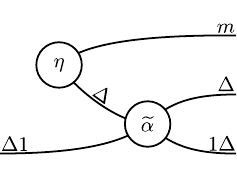}

        \end{tabular}
    \end{equation}

    \begin{enumerate}
        \item [] in $\mathbf{Hom}_\mathfrak{C}(A \, \Box \, A,A \, \Box \, A \, \Box \, A)$,

        \item [d.] We have
    \end{enumerate}

    \begin{equation} \label{eqn:UnitRightComod2Mor}
        \begin{tabular}{@{}ccc@{}}

        \stringdiagramfigure{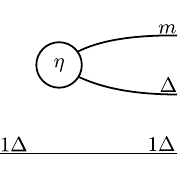} & \adjustbox{valign=c}{$=$} &
        \stringdiagramfigure{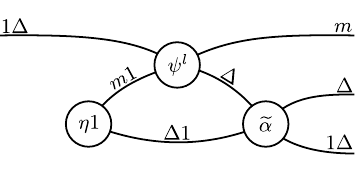}

        \end{tabular}
    \end{equation}

    \begin{enumerate}
        \item [] in $\mathbf{Hom}_\mathfrak{C}(A \, \Box \, A,A \, \Box \, A \, \Box \, A)$.
    \end{enumerate}
\end{Remark}

\begin{Remark}
    A closely related construction appears in Chen et al. \cite[§6]{CFHPS}. Working in a Gray-monoid $\mathfrak{C}$ in which all 1-morphisms admit right adjoints, they consider a rigid algebra $A$ and equip the category $|A| = \mathbf{Hom}_\mathfrak{C}(\mathbf{I},A)$ with a monoidal structure. For each object $a \in |A|$ they build explicit left and right duals $a^\vee$ and ${}^\vee a$ out of the adjunction $a \dashv a^R$ together with the Frobeniator data, and verify the zig-zag identities by an explicit pasting diagram, thereby exhibiting $|A|$ as a multifusion category (their Construction~6.4 and Proposition~6.5). Their algebra hierarchy---algebra, rigid, separable---coincides with the one we use in \S\ref{sec:Algebras}, and, as noted in \cite{CFHPS}, the Frobeniator $\kappa$ is determined by the associator and the adjunction data (an observation attributed there to Reutter), matching our Lemma~\ref{lem:RigidIsFrobenius}. In the unitary ($C^*$ or $H^*$-algebra) setting, \cite{CFHPS} further checks that these duals assemble into a unitary dual functor on $|A|$.
\end{Remark}

\subsection{Special Frobenius Algebras}

\begin{Definition} \label{def:SpecialFrobeniusAlgebra}
    A \textit{special Frobenius algebra} in $\mathfrak{C}$ consists of:
    \begin{enumerate}
        \item An underlying Frobenius algebra $A$ in $\mathfrak{C}$;

        \item An $(A,A)$-bilinear retract $\mathbf{1}_A \xrightarrow{\gamma} m \circ \Delta \xrightarrow{\epsilon} \mathbf{1}_A$,
    \end{enumerate} namely, the following equations hold:

    \begin{enumerate}
        \item [a.] $\gamma$ is a section of $\epsilon$, i.e. in $\mathbf{Hom}_\mathfrak{C}(A,A)$ we have
    \end{enumerate}

    \begin{equation} \label{eqn:CounitSection}
        \begin{tabular}{@{}cccc@{}}

        \adjustbox{valign=c}{$\mathbf{1}_{\mathbf{1}_A}$} & \adjustbox{valign=c}{$=$} &
        \stringdiagramfigure{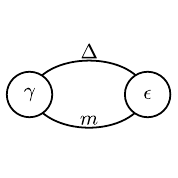} & \adjustbox{valign=c}{,}

        \end{tabular}
    \end{equation}

    \begin{enumerate}
        \item [b.] $\gamma$ is $(A,A)$-bilinear, i.e. we have
    \end{enumerate}

    \begin{equation} \label{eqn:SectionLeftMod2Mor}
        \begin{tabular}{@{}ccc@{}}

        \stringdiagramfigure{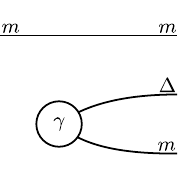} & \adjustbox{valign=c}{$=$} &
        \stringdiagramfigure{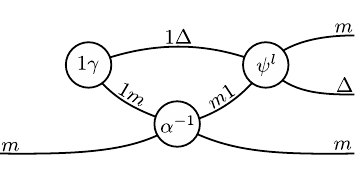}

        \end{tabular}
    \end{equation}

    \begin{enumerate}
        \item [] in $\mathbf{Hom}_\mathfrak{C}(A \, \Box \, A,A)$, and
    \end{enumerate}

    \begin{equation} \label{eqn:SectionRightMod2Mor}
        \begin{tabular}{@{}ccc@{}}

        \stringdiagramfigure{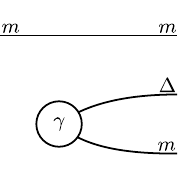} & \adjustbox{valign=c}{$=$} &
        \stringdiagramfigure{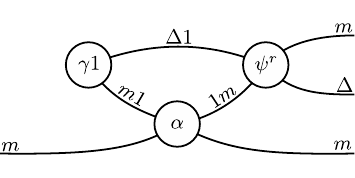}

        \end{tabular}
    \end{equation}

    \begin{enumerate}
        \item [] in $\mathbf{Hom}_\mathfrak{C}(A \, \Box \, A,A)$;

        \item [c.] $\epsilon$ is $(A,A)$-bilinear, i.e. equations~(\ref{eqn:CounitLeftMod2Mor}) and~(\ref{eqn:CounitRightMod2Mor}) hold.
    \end{enumerate}
\end{Definition}

\begin{Definition} \label{def:SeparableAlgebra}
    A \textit{separable algebra} in $\mathfrak{C}$ consists of:
    \begin{enumerate}
        \item An underlying rigid algebra $(A,m,i,\Delta,\alpha,\lambda,\rho,\eta,\epsilon)$ in $\mathfrak{C}$;

        \item An extra special Frobenius algebra structure on $A$, i.e. an $(A,A)$-bilinear section $\gamma: \mathbf{1}_A \to m \circ \Delta$.
    \end{enumerate}
\end{Definition}

\subsection{Comparisons} \label{subsec:Comparisons}

We collect several examples and comparison results relating the algebra hierarchy above. These clarify the relationship between the Frobenius, rigid, special Frobenius, and separable conditions.

\begin{Example} \label{exmp:Rigid_algebra_in_2Vect_is_separable}
    In $\mathfrak{C} = \mathbf{2Vect}$, rigid algebras correspond to multifusion 1-categories, while separable algebras correspond to separable multifusion 1-categories. Moreover, \cite{DSPS13} demonstrates that every multifusion 1-category is separable over a field of characteristic zero.
\end{Example}

Décoppet has generalized this observation to rigid algebras in an arbitrary multifusion 2-category.

\begin{Proposition} \label{prop:Rigid_algebra_in_fusion_2-category_is_separable}
    Over a field $\Bbbk$ of characteristic zero, every rigid algebra in a multifusion 2-category is separable.
\end{Proposition}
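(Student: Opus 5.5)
The plan is to reduce the statement to the characteristic-zero separability of multifusion 1-categories (Example~\ref{exmp:Rigid_algebra_in_2Vect_is_separable}), using the fact that separability is a property detected by a trace. Let $A$ be a rigid algebra in a multifusion 2-category $\mathfrak{C}$. By Definition~\ref{def:SeparableAlgebra}, the task is to produce an $(A,A)$-bilinear section $\gamma\colon \mathbf{1}_A \Rightarrow m\circ\Delta$ of the counit $\epsilon$.

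The first step is to reformulate the problem inside $\mathbf{Bimod}_\mathfrak{C}(A,A)$. Here $m\circ\Delta$ is an $(A,A)$-bimodule 1-endomorphism of $A$, and $\epsilon$ is a bimodule 2-morphism. Since $\mathfrak{C}$ is multifusion, its Hom-categories are finite semisimple. I would argue, as in \cite{D7,D8}, that the Hom-categories of $\mathbf{Bimod}_\mathfrak{C}(A,A)$ are at least finite and abelian, and that $\epsilon$ is an epimorphism there. The zigzag equations (\ref{eqn:Zigzag1})--(\ref{eqn:Zigzag2}) show that $\epsilon$ is split in $\mathfrak{C}$, and hence an epimorphism. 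A bimodule section then exists once the relevant object is projective in the bimodule Hom-category. The task therefore becomes showing that $\mathbf{1}_A$ is projective in $\mathbf{End}_{\mathbf{Bimod}(A,A)}(A)$.

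The second step reduces to $\mathbf{2Vect}$. Choose a faithful $\mathfrak{C}$-module realization: the Hom 2-functor $\mathbf{Hom}_\mathfrak{C}(\mathbf{I},-)$, or more robustly the regular action on $\mathfrak{C}$ viewed as a finite semisimple 2-category. This sends $A$ to a multifusion-type algebra object. More concretely, consider the monoidal category $\mathbf{Hom}_\mathfrak{C}(\mathbf{I}\text{-summands}, A)$, or $|A|=\mathbf{Hom}_\mathfrak{C}(\mathbf{I},A)$ with the monoidal structure of \cite{CFHPS}, which is multifusion by rigidity of $A$. Over characteristic zero it is then separable by \cite{DSPS13}: its global dimension is nonzero, and the canonical Casimir section $\sum_i \dim(x_i)\,(\mathrm{coev}_{x_i})/\dim$ splits the relevant multiplication. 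I would then transport this Casimir element back into a 2-morphism $\gamma$ in $\mathfrak{C}$. To do so, write $m\circ\Delta$ via the rigid structure as a "coend" $\bigoplus_x x\otimes x^\vee$ acting on $A$. Define $\gamma$ as the weighted sum of coevaluations, normalized by the inverse of a dimension scalar. Bilinearity of $\gamma$, meaning equations (\ref{eqn:SectionLeftMod2Mor}) and (\ref{eqn:SectionRightMod2Mor}), should follow from naturality of the coevaluations, together with the bilinearity of the adjunction data recorded in (\ref{eqn:UnitLeftMod2Mor})--(\ref{eqn:UnitRightMod2Mor}).

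The main obstacle is the section equation (\ref{eqn:CounitSection}), $\epsilon\circ\gamma=\mathbf{1}$. Composing the Casimir element with $\epsilon$ yields a scalar on each block of $\mathbf{End}(\mathbf{1}_A)$, which is a finite product of copies of $\Bbbk$ when $A$ is decomposed. This scalar is essentially the global dimension of the corresponding fusion component, so I need it to be nonzero. For this I would invoke the characteristic-zero theorem that fusion 1-categories have nonzero global dimension (Etingof--Nikshych--Ostrik), applied to the dual fusion categories $\mathbf{End}_{\mathbf{Bimod}(A,A)}(A)$ restricted to each simple summand. Dividing by this scalar gives the section. If identifying this scalar with a global dimension proves unwieldy, the fallback is to cite Décoppet's argument \cite{D7}, which shows that $\mathbf{Bimod}_\mathfrak{C}(A,A)$ is multifusion with nonzero dimension, so that $A$ is separable.
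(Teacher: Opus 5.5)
Your main line has a genuine gap: nothing carries the Casimir section from $|A|$ back into $\mathfrak{C}$, and the non-vanishing step is circular.

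\textbf{The transport step.} In a general multifusion 2-category, the 2-functor $\mathbf{Hom}_\mathfrak{C}(\mathbf{I},-)$ fails in three ways.
\begin{enumerate}
\item It is not faithful. For $G$-graded 2-vector spaces $\mathbf{2Vect}_G$ it sends every simple $g\neq e$ to $0$, so $|A|$ cannot see the summands of $A$ outside the component of $\mathbf{I}$.
\item It is only lax monoidal. The comparison $|A|\boxtimes|A|\to\mathbf{Hom}_\mathfrak{C}(\mathbf{I},A\,\Box\,A)$ need not be an equivalence, so the right adjoint of $\otimes_{|A|}$ is not computed by $\Delta$.
\item Natural transformations between the induced functors need not come from 2-morphisms in $\mathfrak{C}$.
\end{enumerate}
So a Casimir section for the multifusion category $|A|$ yields no 2-morphism $\mathbf{1}_A\Rightarrow m\circ\Delta$ in $\mathfrak{C}$. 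Likewise, the formula $m\circ\Delta\simeq\bigoplus_x x\otimes x^\vee$ has no meaning for an object $A$ of a general $\mathfrak{C}$.

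\textbf{The first and last steps.} Your first step makes no progress: projectivity of $\mathbf{1}_A$ in $\mathbf{End}_{\mathbf{Bimod}_\mathfrak{C}(A,A)}(A)$ is just a reformulation of separability. For $\epsilon\circ\gamma=\mathbf{1}_{\mathbf{1}_A}$, you apply non-vanishing of global dimension to $\mathbf{End}_{\mathbf{Bimod}_\mathfrak{C}(A,A)}(A)$ regarded as a fusion category. But that category is semisimple only once $A$ is known to be separable. Already in $\mathbf{2Vect}$ it is the Drinfeld center $\mathcal{Z}(\mathcal{C})$, which is semisimple exactly when $\dim\mathcal{C}\neq 0$. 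So this assumes what you want to prove.

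\textbf{The missing idea: reduction to the connected case.} The paper proves the statement by citation, in three steps:
\begin{enumerate}
\item Separability of a rigid algebra is equivalent to non-vanishing of its dimension, a scalar intrinsic to $\mathfrak{C}$ \cite[3.2.4]{D7}.
\item This dimension is non-zero for rigid algebras in a connected fusion 2-category \cite[3.3.3]{D7}.
\item Every fusion 2-category is Morita equivalent to a connected one \cite[4.2.2]{D9}.
\end{enumerate}
A passage like yours to $|A|$ becomes legitimate only in the connected case, where $\mathbf{Hom}_\mathfrak{C}(\mathbf{I},-)$ identifies $\mathfrak{C}$ with $\mathbf{Mod}(\Omega\mathfrak{C})$. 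Even there the relevant product is $\boxtimes_{\Omega\mathfrak{C}}$, not the Deligne product over $\mathbf{Vect}$, so you are not literally in $\mathbf{2Vect}$.

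Your fallback of citing \cite{D7} points in the paper's direction, with two problems. The input you attribute to it, that $\mathbf{Bimod}_\mathfrak{C}(A,A)$ is multifusion, is a consequence of separability rather than a route to it. And \cite{D7} alone does not supply the Morita step \cite{D9} needed to pass from connected to arbitrary (multi)fusion 2-categories.
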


\begin{proof}
    In \cite[3.2.4]{D7}, Décoppet provides an equivalent characterization of separability as the non-vanishing of the dimension of the algebra. A later result \cite[3.3.3]{D7} shows this holds for rigid algebras in a connected fusion 2-category. Finally, \cite[4.2.2]{D9} shows that every fusion 2-category is Morita equivalent to a connected fusion 2-category, so the result holds for all multifusion 2-categories.
\end{proof}

\begin{Proposition} \label{prop:Frobenius_algebra_in_2Vect_is_rigid}
    In $\mathfrak{C} = \mathbf{2Vect}$, every special Frobenius algebra is rigid.
\end{Proposition}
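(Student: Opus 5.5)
An algebra $A$ in $\mathbf{2Vect}$ is a finite semisimple multifusion-type monoidal category $\mathcal{A}$: $m=\otimes$ and $i$ picks out the unit object $\mathbb{1}$. Rigidity of $A$ in the sense of Definition~\ref{def:RigidAlgebra} means that the right adjoint $m^R$ of $\otimes$ is $(A,A)$-bilinear with bilinear unit and counit. By the standard result (used in Example~\ref{exmp:Rigid_algebra_in_2Vect_is_separable}), this is equivalent to every object of $\mathcal{A}$ having left and right duals. The plan is therefore to use the special Frobenius data $(\Delta,\psi^l,\psi^r,\gamma,\epsilon)$ to produce duals for every object $a\in\mathcal{A}$. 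After that, we invoke the known equivalence ``rigid monoidal $\Leftrightarrow$ rigid algebra in $\mathbf{2Vect}$'', for which the bilinear right adjoint of $\otimes$ is given explicitly by $\Delta(x)=\bigoplus_{s} x\otimes s^\lor\boxtimes s$.

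\textbf{Step 1: the categorified Casimir object.} Set
\[
\Omega:=\Delta(\mathbb{1})\in\mathcal{A}\boxtimes\mathcal{A}.
\]
The Frobeniusators give natural isomorphisms
\[
\Delta(a)\cong(a\boxtimes\mathbb{1})\otimes\Omega\cong\Omega\otimes(\mathbb{1}\boxtimes a),
\]
obtained by applying $\psi^l,\psi^r$ to $a\otimes\mathbb{1}$ and $\mathbb{1}\otimes a$ together with the unitors. Hence $\Omega$ is ``central'': $(a\boxtimes\mathbb{1})\otimes\Omega\cong\Omega\otimes(\mathbb{1}\boxtimes a)$, coherently in $a$. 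Since $\mathcal{A}\boxtimes\mathcal{A}$ is finite semisimple, I write $\Omega\cong\bigoplus_j u_j\boxtimes v_j$. The special structure then gives a retract
\[
\mathbb{1}\xrightarrow{\gamma}m\Delta(\mathbb{1})=\bigoplus_j u_j\otimes v_j\xrightarrow{\epsilon}\mathbb{1},
\]
and, for every $a$, a retract $a\to\bigoplus_j a\otimes u_j\otimes v_j\to a$, which by centrality is also $a\to\bigoplus_j u_j\otimes v_j\otimes a\to a$.

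\textbf{Step 2: constructing duals.} For an object $a$, I define the candidate dual using the Casimir and $\mathrm{Hom}$-spaces, in analogy with how a Casimir element $\sum u_j\otimes v_j$ yields dual bases in a separable Frobenius algebra:
\[
a^\lor:=\bigoplus_j \mathrm{Hom}(a\otimes u_j,\mathbb{1})^*\otimes v_j
\quad\text{or}\quad
a^\lor:=\bigoplus_j \mathrm{Hom}(\mathbb{1},\dots)\otimes\dots.
\]
Concretely, by semisimplicity it suffices to treat simple $a$. I would show that some $v_j$ (a summand of the Casimir) admits maps $\mathrm{ev}: v_j\otimes a\to\mathbb{1}$ and $\mathrm{coev}:\mathbb{1}\to a\otimes v_j$ extracted from the components of $\epsilon$ and $\gamma$ on $a$, and then check the snake identities. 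The identity $\epsilon\circ\gamma=\mathbf{1}$ on $a$, read through the centrality isomorphisms, is exactly the categorified statement $\sum_j u_j\,a\,v_j$-type factorisation. Composing the two retracts yields a map $a\to a\otimes v^\lor\otimes a\to a$ equal to the identity, which gives a nonzero zigzag. For simple $a$ this zigzag is a nonzero scalar, which can be normalised to $1$. The same argument with the left/right roles exchanged gives left duals.

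\textbf{Step 3: conclusion, and the main obstacle.} Once every object has two-sided duals, $\mathcal{A}$ is a multifusion category, and $\otimes$ has the bilinear right adjoint, so $A$ is rigid by Example~\ref{exmp:Rigid_algebra_in_2Vect_is_separable}. Bilinearity of $\gamma$ and $\epsilon$ is only used to make the centrality isomorphisms compatible with the retract. I expect Step 2 to be the main obstacle: extracting genuine evaluation and coevaluation maps from the Casimir components, and verifying that the zigzag composite is nonzero rather than merely appearing inside a larger direct sum. I would first try to handle this by projecting onto the simple summand $\mathbb{1}$ in $\mathrm{Hom}(a,a)$ and using Schur's lemma.
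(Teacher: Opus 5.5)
There is a genuine gap in Step 2, and it is structural rather than technical. Your argument never uses the counit $d$ of the coalgebra structure (called $\epsilon$ in the paper's proof, where the retraction is called $\kappa$), i.e.\ the nondegeneracy of the Frobenius pairing. The special retract alone cannot produce duals. By bilinearity, under the isomorphism $m\Delta(a)\cong a\otimes m\Delta(\mathbf{I})$ built from $\psi^l$ and the associator, $\gamma_a$ and $\epsilon_a$ are just $a\otimes\gamma_{\mathbf{I}}$ and $a\otimes\epsilon_{\mathbf{I}}$. Hence $\epsilon_a\circ\gamma_a=\mathrm{id}_a$ is nothing but $a\otimes(\epsilon_{\mathbf{I}}\circ\gamma_{\mathbf{I}})$. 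It contains no evaluation or coevaluation involving $a$, and there is no sense in which ``composing the two retracts'' yields a zigzag $a\to a\otimes v\otimes a\to a$. Moreover, your decomposition $\Delta(\mathbf{I})\cong\bigoplus_j u_j\boxtimes v_j$ is uncontrolled: nothing you use shows that a given simple $a$ occurs among the $u_j$ at all. The component detected by $\epsilon_{\mathbf{I}}\circ\gamma_{\mathbf{I}}=\mathrm{id}$ may be a trivial one of the form $\mathbf{I}\to\mathbf{I}\otimes v\to\mathbf{I}$. In fact the paper uses exactly the component landing in $\mathbf{I}\otimes K$, and it only tells you something about the unit. Your candidate formula with $\mathrm{Hom}(a\otimes u_j,\mathbf{I})$ also pairs against the wrong functional: the natural one is $d$, which is not $\mathbf{Hom}(\mathbf{I},-)$ a priori.

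The missing idea, which is how the paper proceeds, is to pin down $\Delta(\mathbf{I})$ and $d$ from the Frobenius structure first, and only then use specialness, for a single purpose.

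\emph{Frobenius structure.} The axioms make $d\circ\otimes$ and $\Delta(\mathbf{I})$ an evaluation/coevaluation pair exhibiting $\mathcal{A}$ as self-dual in $\mathbf{2Vect}$. For simples $x_0=\mathbf{I},\dots,x_n$, this says the multiplicity matrix of $\Delta(\mathbf{I})\cong\bigoplus\omega^{ij}\,x_i\boxtimes x_j$ is inverse to $(\dim d(x_i\otimes x_j))$. Two mutually inverse matrices with nonnegative integer entries are permutation matrices. So $\Delta(\mathbf{I})\cong\bigoplus_i x_i\boxtimes x_{\tau(i)}$, with every simple occurring exactly once, and $d\cong\mathbf{Hom}(K,-)$ for a simple object $K$. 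For each $i$, $\mathbf{Hom}(K,x_i\otimes x_j)$ is one-dimensional for exactly one $j$ and zero otherwise. This holds for every Frobenius algebra, and the example with $K\otimes K=0$ shows it does not yet give rigidity.

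\emph{Specialness.} This is what forces $K\cong\mathbf{I}$. Bilinearity determines $\gamma$ by its component at $\mathbf{I}$. Projection onto the summand $x_0\otimes x_{\tau(0)}\cong K$ then gives $\mathbf{Nat}_{\mathcal{A}|\mathcal{A}}(\mathrm{Id},\otimes\circ\Delta)\cong\mathbf{Hom}(\mathbf{I},K)$, and $\gamma\neq 0$ plus Schur's lemma gives $K\cong\mathbf{I}$.

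\emph{Duals.} Only then do the one-dimensional spaces $\mathbf{Hom}(\mathbf{I},x_i\otimes x_j)$ supply coevaluations, and, via $\dim\mathbf{Hom}(a,b)=\dim\mathbf{Hom}(b,a)$ for simples, evaluations. The zigzag isomorphisms of the self-duality of $\mathcal{A}$, not the retract, govern the snake identities.

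Your Step 1 (the Casimir object with its two distributivity isomorphisms) and your reduction to showing that every simple object has duals do match the paper.
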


\begin{proof}
    Let $\mathcal{C}$ be a finite semisimple 1-category. A special Frobenius monoidal structure on $\mathcal{C}$ consists of:
    \begin{itemize}
        \item A monoidal structure $\otimes \colon \mathcal{C} \boxtimes \mathcal{C} \to \mathcal{C}$ and unit $\mathbf{I} \colon \mathbf{Vect} \to \mathcal{C}$, with associativity and unitality coherence data (which we may assume strict by the coherence theorem);

        \item A comonoidal structure $\Delta \colon \mathcal{C} \to \mathcal{C} \boxtimes \mathcal{C}$ and counit $\epsilon \colon \mathcal{C} \to \mathbf{Vect}$, with coassociativity and counitality coherence data (also assumed strict);

        \item A $(\mathcal{C},\mathcal{C})$-bimodule functor structure on $\Delta$, i.e.\ natural isomorphisms
        \begin{equation*}
            \psi^l \colon (\otimes \boxtimes \mathrm{Id}) \circ (\mathrm{Id} \boxtimes \Delta) \to \Delta \circ \otimes
            \quad \text{and} \quad
            \psi^r \colon (\mathrm{Id} \boxtimes \otimes) \circ (\Delta \boxtimes \mathrm{Id}) \to \Delta \circ \otimes;
        \end{equation*}

        \item Two $(\mathcal{C},\mathcal{C})$-bimodule natural transformations $\mathrm{Id}_\mathcal{C} \xrightarrow{\gamma} \otimes \circ \Delta \xrightarrow{\kappa} \mathrm{Id}_\mathcal{C}$ with $\kappa \circ_2 \gamma = \mathrm{Id}_{\mathrm{Id}_\mathcal{C}}$.
    \end{itemize}

    Since $\mathcal{C}$ is finite semisimple, we decompose $\Delta(\mathbf{I}) \simeq \bigoplus_{i,j = 0}^n \omega^{ij} \, x_i \boxtimes x_j$, where $\mathbf{I} =: x_0, x_1, \dots, x_n$ are the simple objects of $\mathcal{C}$ and $\omega^{ij} \in \mathbb{Z}_{\geq 0}$. The Frobenius condition (the $(\mathcal{C},\mathcal{C})$-bilinear structure on $\Delta$) implies that $(\omega^{ij})$ is the inverse of the matrix $(\epsilon(x_i \otimes x_j))$, hence a permutation matrix. Let $\tau$ be the permutation such that $\omega^{ij} = 1$ if and only if $j = \tau(i)$.

    This implies the counit is co-represented by the simple object $K := x_{\tau(0)} \simeq x_{\tau^{-1}(0)}$, with $\epsilon \simeq \mathbf{Hom}_\mathcal{C}(K,-)$.

    The special Frobenius condition implies $\mathbf{Nat}_{\mathcal{C}|\mathcal{C}}(\mathrm{Id}_\mathcal{C}, \otimes \circ \Delta) \neq 0$. Consider the \emph{Casimir object} $\Omega := \otimes \circ \Delta(\mathbf{I}) \simeq \bigoplus_{i=0}^n x_i \otimes x_{\tau(i)}$. The $(\mathcal{C},\mathcal{C})$-bimodule structure on $\Delta$ gives
    \begin{equation} \label{eqn:CasimirLeftDistributivity}
        \otimes \circ \Delta(y) \xrightarrow[\;\sim\;]{\otimes \circ_1 (\psi^l_{y,\mathbf{I}})^{-1}} y \otimes \Omega,
    \end{equation}
    and
    \begin{equation} \label{eqn:CasimirRightDistributivity}
        \otimes \circ \Delta(y) \xrightarrow[\;\sim\;]{\otimes \circ_1 (\psi^r_{\mathbf{I},y})^{-1}} \Omega \otimes y.
    \end{equation}

    The projection $p\colon \Omega \to x_0 \otimes x_{\tau(0)} \simeq K$ induces an isomorphism
    \[\mathbf{Nat}_{\mathcal{C}|\mathcal{C}}(\mathrm{Id}_\mathcal{C}, \otimes \circ \Delta) \xrightarrow{\;\sim\;} \mathbf{Hom}_\mathcal{C}(\mathbf{I},K),\]
    since the $(\mathcal{C},\mathcal{C})$-bimodule structure determines a natural transformation from its value at $\mathbf{I}$, and $\epsilon(x_i \otimes x_{\tau(i)}) \simeq \mathbf{Hom}_\mathcal{C}(K, x_i \otimes x_{\tau(i)})$ is one-dimensional for each $i$. Since $\mathbf{Nat}_{\mathcal{C}|\mathcal{C}}(\mathrm{Id}_\mathcal{C}, \otimes \circ \Delta) \neq 0$, we have $\mathbf{Hom}_\mathcal{C}(\mathbf{I},K) \neq 0$. As $\mathbf{I}$ and $K$ are both simple, Schur's lemma gives $\mathbf{I} \simeq K$.

    To prove rigidity, it suffices to show every simple object has duals. Using the coassociativity of $\Delta$ and the decomposition of $\Delta(\mathbf{I})$, one checks that
    \[ y \xrightarrow{\;\sim\;} \bigoplus_{i=0}^n \epsilon(y \otimes x_{\tau^{-1}(i)}) \boxtimes x_i \quad \text{and} \quad y \xrightarrow{\;\sim\;} \bigoplus_{i=0}^n x_i \boxtimes \epsilon(x_{\tau(i)} \otimes y). \]
    Since $\mathbf{I} \simeq K$, the spaces $\epsilon(x_{\tau^{-1}(i)} \otimes x_i) \simeq \mathbf{Hom}_\mathcal{C}(\mathbf{I}, x_{\tau^{-1}(i)} \otimes x_i)$ are one-dimensional, so one can choose non-zero coevaluation maps. By Schur's lemma, $\dim \mathbf{Hom}_\mathcal{C}(a,b) = \dim \mathbf{Hom}_\mathcal{C}(b,a)$ for simple $a,b$, so $x_{\tau(i)} \simeq x_{\tau^{-1}(i)}$ non-canonically, and evaluation maps satisfying the zigzag identities can be chosen.
\end{proof}

\begin{Remark} \label{rmk:CasimirIsCentralCommutativeAlgebra}
    In the setting of Proposition~\ref{prop:Frobenius_algebra_in_2Vect_is_rigid}, the Casimir object $\Omega = \otimes \circ \Delta(\mathbf{I})$ carries a canonical special Frobenius algebra structure in $\mathcal{C}$. Moreover, the isomorphisms~(\ref{eqn:CasimirLeftDistributivity}) and~(\ref{eqn:CasimirRightDistributivity}) equip $\Omega$ with a canonical half-braiding, so that it lifts to a commutative algebra in the Drinfeld center $\mathcal{Z}_1(\mathcal{C})$.
\end{Remark}

\begin{Remark}
    A general Frobenius algebra in $\mathbf{2Vect}$ is not necessarily rigid. For instance, take a finite semisimple 1-category $\mathcal{C}$ with two simple objects $\mathbf{I}$ and $K$ such that $K \otimes K = 0$. Then $\mathcal{C}$ admits a Frobenius structure with $\Delta(\mathbf{I}) = K \boxtimes \mathbf{I} \oplus \mathbf{I} \boxtimes K$, $\Delta(K) = K \boxtimes K$, $\epsilon(\mathbf{I}) = 0$, $\epsilon(K) = \Bbbk$, but $K$ has no dual, so $\mathcal{C}$ is not rigid.
\end{Remark}

\begin{Remark} \label{rmk:SpecialFrobeniusAlgebraMightNotBeRigid}
    In a general monoidal 2-category, special Frobenius algebras might not be rigid. The author is not aware of a counterexample; one could plausibly be constructed in a monoidal 2-category where the existence of adjoints for 1-morphisms is severely restricted.
\end{Remark}

Whether every special Frobenius algebra in a fusion 2-category is rigid remains an open question. Proposition~\ref{prop:Frobenius_algebra_in_2Vect_is_rigid} establishes this in the case $\mathfrak{C} = \mathbf{2Vect}$ by an explicit combinatorial argument, but that argument does not immediately generalize. We will expand on these questions in the future work.

% A proof in full generality would need to resolve at least one of the following sub-questions:
% \begin{itemize}
%     \item \textbf{Morita invariance.} If $A$ is a special Frobenius algebra in a fusion 2-category $\mathfrak{C}$, and $A$ is Morita equivalent to another algebra $B$, does $B$ carry a canonical special Frobenius structure?

%     \item \textbf{Morita equivalence of ambient 2-categories.} If every special Frobenius algebra in a fusion 2-category $\mathfrak{C}$ is rigid, and $\mathfrak{C} \simeq \mathfrak{C}'$ are Morita equivalent fusion 2-categories, does the same hold in $\mathfrak{C}'$? (This is motivated by Décoppet's result \cite[4.2.2]{D9} that every fusion 2-category is Morita equivalent to a connected one.)

%     \item \textbf{Forgetful functor.} For a braided fusion 1-category $\mathcal{A}$, the forgetful 2-functor $\mathbf{Mod}(\mathcal{A}) \to \mathbf{2Vect}$ is not strongly monoidal. Can it nonetheless transfer special Frobenius structures, and can rigidity in $\mathbf{2Vect}$ be lifted to rigidity in $\mathbf{Mod}(\mathcal{A})$?
% \end{itemize}

\section{Main Result}

At the end of Preliminaries, we recall that there exists a fully dualizable Morita 3-category of separable algebras within every fusion 2-category $\mathfrak{C}$. In particular, for separable algebras $A,B$ in $\mathfrak{C}$, any $(A,B)$-bimodule $M$ has both a left and a right dual. However, the proofs in the literature \cite{D8} do not provide explicit constructions of these duals. We now provide such constructions in a slightly more general setting. In the following, we fix two Frobenius algebras $A$ and $B$ in a semistrict monoidal 2-category $\mathfrak{C}$ (we add superscripts $A$ or $B$ to distinguish their data; see the Notation in \S\ref{sec:Algebras}). For convenience, we assume relative tensor products over $A$ and $B$ exist in $\mathfrak{C}$.

\subsection{Definitions}

\begin{Definition}
    Let $M$ be an $(A,B)$-bimodule in $\mathfrak{C}$. A coherent right dual of $M$ consists of:
    \begin{enumerate}
        \item An object $N$ in $\mathfrak{C}$,
        
        \item A $(B,A)$-bimodule structure on $N$,
        
        \item A $(B,B)$-bilinear 1-morphism $\widetilde{c}:B \to N \, \Box_A \, M$,
        
        \item An $(A,A)$-bilinear 1-morphism $\widetilde{e}:M \, \Box_B \, N \to A$,
        
        \item Two 2-isomorphisms \[\begin{tikzcd}[column sep=30pt]
            {B \, \Box_B \, N}
                \arrow[r,"\widetilde{c} \, \Box_B \, N"]
            & { (N \, \Box_A \, M) \, \Box_B \, N}
                \arrow[dd,"\pmb{\alpha}_{N,M,N}"]
                \arrow[ddl,Rightarrow,shorten <= 30pt,shorten >= 30pt,"\widetilde{E}"]
            \\ {N}
                \arrow[u,"\pmb{l}^{-1}_N"]
            & {}
            \\ {N \, \Box_A \, A}
                \arrow[u,"\pmb{r}_{N}"]
            & {N \, \Box_A \, (M \, \Box_B \, N)}
                \arrow[l,"N \, \Box_A \, \widetilde{e}"]
        \end{tikzcd}, \] \[ \begin{tikzcd}[column sep=30pt]
            {M \, \Box_B \, B}
                \arrow[r,"M \, \Box_B \, \widetilde{c}"]
            & {M \, \Box_B \, (N \, \Box_A \, M)}
                \arrow[dd,"\pmb{\alpha}^{-1}_{M,N,M}"]
                \arrow[ddl,Rightarrow,shorten <= 30pt,shorten >= 30pt,"\widetilde{F}"]
            \\ {M}
            \arrow[u,"\pmb{r}^{-1}_{M}"]
            & {}
            \\ {A \, \Box_A \, M}
                \arrow[u,"\pmb{l}_{M}"]
            & {(M \, \Box_B \, N) \, \Box_A \, M}
                \arrow[l,"\widetilde{e} \, \Box_A \, M"]
        \end{tikzcd},\]
    \end{enumerate} such that 
    \begin{enumerate}
        \item [a.] $\widetilde{E}$ is a $(B,A)$-bilinear 2-isomorphism;
        
        \item [b.] $\widetilde{F}$ is an $(A,B)$-bilinear 2-isomorphism;
        
        \item [c.] $\widetilde{E}$ and $\widetilde{F}$ satisfy the coherence equations (\ref{eqn:CoherentDual1}) and (\ref{eqn:CoherentDual2}) with slight modifications, i.e. we need to insert extra coherence data\footnote{Even when $\mathfrak{C}$ is semistrict, the composition of relative tensor products in $\mathfrak{C}$ are in general associative and unital up to some coherence data provided by the 2-universal properties \cite{D8}.} for the composition of relative tensor products in suitable places.
    \end{enumerate}

    Similarly, one can define a coherent left dual of $M$.
\end{Definition}

\noindent Let us denote the $(A,B)$-bimodule structure on $M$: 

\begin{enumerate}
    \item $M$ has left $A$-module structure $(l,\mu,\lambda)$,
    
    \item $M$ has right $B$-module structure $(n,\nu,\rho)$,
    
    \item $M$ has bimodule associator $\alpha$.
\end{enumerate}

\noindent Fix a coherent right dual $(M^\lor,c,e,E,F)$ of the underlying object of $M$ in $\mathfrak{C}$: \[c: \mathbf{I} \to M^\lor \, \Box \, M,\quad e: M \, \Box \, M^\lor \to \mathbf{I},\] \[ \begin{tikzcd}
            {M^\lor}
                \arrow[r,"c 1"]
                \arrow[d,equal]
            & { M^\lor \, \Box \, M \, \Box \, M^\lor}
                \arrow[d,"1 e"]
                \arrow[dl,Rightarrow,shorten <= 10pt,shorten >= 10pt,"E"]
            \\ {M^\lor}
                \arrow[r,equal]
            & {M^\lor}
        \end{tikzcd}, \begin{tikzcd}
            {M}
                \arrow[r,"1 c"]
                \arrow[d,equal]
            & { M \, \Box \, M^\lor \, \Box \, M}
                \arrow[d,"e 1"]
                \arrow[dl,Rightarrow,shorten <= 10pt,shorten >= 10pt,"F"]
            \\ {M}
                \arrow[r,equal]
            & {M}
        \end{tikzcd}.\]

\subsection{Bimodule Structure on the Dual}
$M^\lor$ can be promoted to a coherent right dual of $(A,B)$-bimodule $M$ with the following data.

\begin{Construction} \label{cstr:RightActionOnDualBimodule}
    We construct the right $A$-action on $M^\lor$ via the composition \[M^\lor \, \Box \, A \xrightarrow{c 1 1} M^\lor \, \Box \, M \, \Box \, M^\lor \, \Box \, A \xrightarrow{1 i^A 1 1 1} M^\lor \, \Box \, A \, \Box \, M \, \Box \, M^\lor \, \Box \, A\] \[\xrightarrow{1 \Delta^A 1 1 1} M^\lor \, \Box \, A \, \Box \, A \, \Box \, M \, \Box \, M^\lor \, \Box \, A \xrightarrow{1 1 l 1 1} M^\lor \, \Box \, A \, \Box \, M \, \Box \, M^\lor \, \Box \, A \] \[\xrightarrow{1 1 e 1} M^\lor \, \Box \, A \, \Box \, A  \xrightarrow{1 m^A} M^\lor \, \Box \, A \xrightarrow{1 d^A} M^\lor, \] with associator 
    
    \begin{equation} \label{eqn:DualBimoduleRightActionAssociator}
        \begin{tabular}{@{}c@{}}

        \stringdiagramfigure{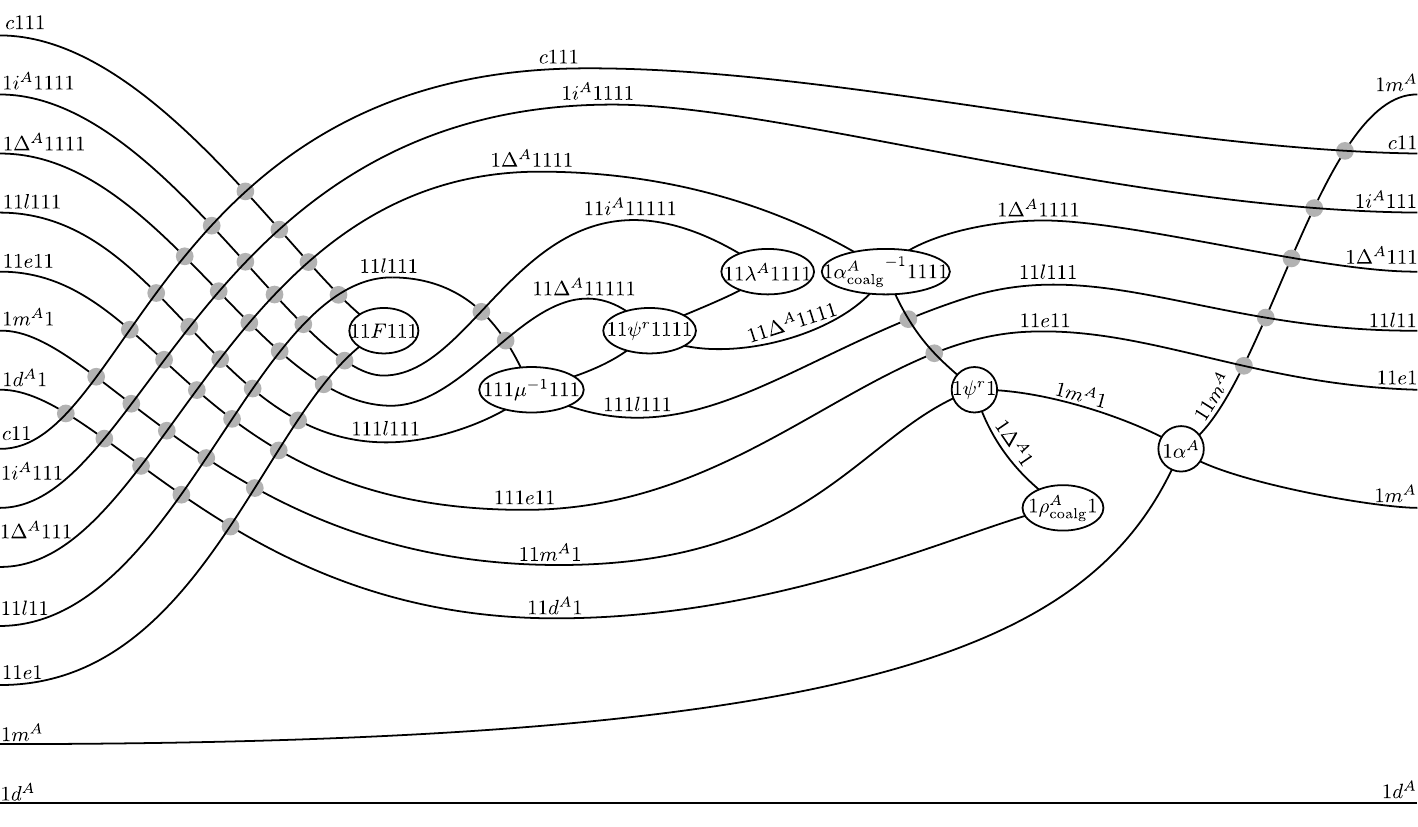}

        \end{tabular}
    \end{equation}
    
    and right unitor

    \begin{equation} \label{eqn:DualBimoduleRightActionUnitor}
        \begin{tabular}{@{}cc@{}}
        
        \stringdiagramfigure{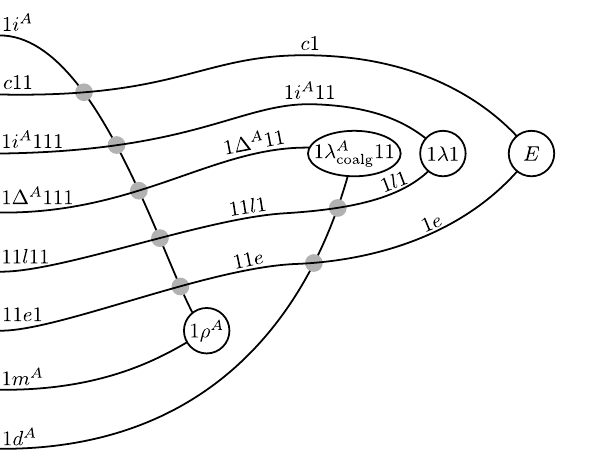} & \adjustbox{valign=c}{.}
        
        \end{tabular}
    \end{equation}
\end{Construction}

\begin{Construction} \label{cstr:LeftActionOnDualBimodule}
    We construct the left $B$-action on $M^\lor$ via the composition \[B \, \Box \, M^\lor \xrightarrow{c 1 1} M^\lor \, \Box \, M \, \Box \, B \, \Box \, M^\lor \xrightarrow{1 n 1} M^\lor \, \Box \, M \, \Box \, M^\lor \xrightarrow{1 e} M^\lor,\] with associator 

    \begin{equation} \label{eqn:DualBimoduleLeftActionAssociator}
        \begin{tabular}{@{}c@{}}

        \stringdiagramfigure{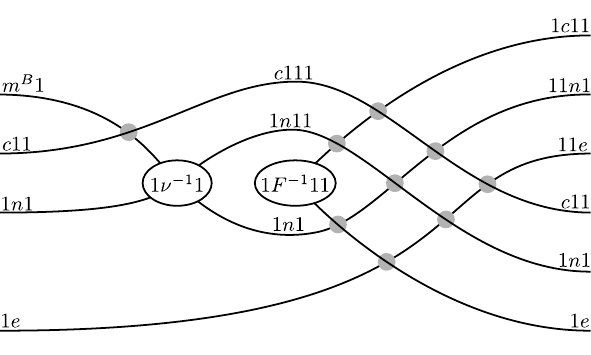}
        
        \end{tabular}
    \end{equation}

    and left unitor

    \begin{equation} \label{eqn:DualBimoduleLeftActionUnitor}
        \begin{tabular}{@{}cc@{}}
        
        \stringdiagramfigure{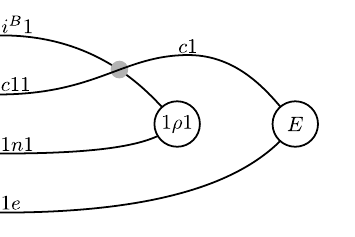} & \adjustbox{valign=c}{.}
        
        \end{tabular}
    \end{equation}
\end{Construction}

\begin{Construction}
    The $(A,B)$-bimodule associator on $M^\lor$ is constructed as follows:

    \begin{equation} \label{eqn:DualBimoduleBimoduleAssociator}
        \begin{tabular}{@{}cc@{}}
        
        \stringdiagramfigure{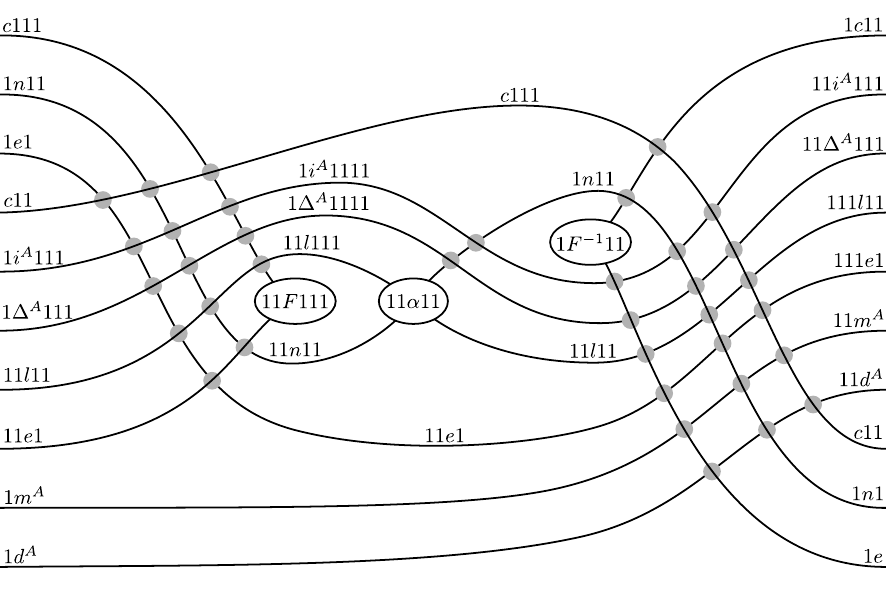} & \adjustbox{valign=c}{.}
        
        \end{tabular}
    \end{equation}
\end{Construction}

\subsection{Duality Coherence 1-Morphisms}

\begin{Construction}
    We construct the unit $\widetilde{c}: B \to M^\lor \, \Box_A \, M$ via composing \[B \xrightarrow{c 1} M^\lor \, \Box \, M \, \Box \, B \xrightarrow{1 n} M^\lor \, \Box \, M \xrightarrow{t} M^\lor \, \Box_A \, M, \] with $t$ the canonical $A$-balanced 1-morphism induced by the universal property of relative tensor product (Definition \ref{def:RelativeTensorProduct}).

    Unit $\widetilde{c}$ also inherits a $(B,B)$-bilinear 1-morphism structure from those on \[B \xrightarrow{c 1} M^\lor \, \Box \, M \, \Box \, B \xrightarrow{1 n} M^\lor \, \Box \, M \] defined via

    \begin{equation} \label{eqn:DualBimoduleUnitLeftAction}
        \begin{tabular}{@{}c@{}}
        
        \stringdiagramfigure{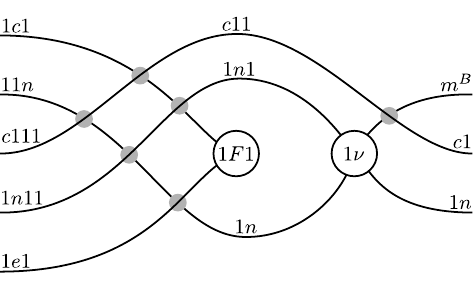}
        
        \end{tabular}
    \end{equation}

    and

    \begin{equation} \label{eqn:DualBimoduleUnitRightAction}
        \begin{tabular}{@{}cc@{}}
        
        \stringdiagramfigure{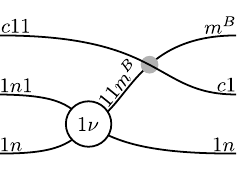} & \adjustbox{valign=c}{.}
        
        \end{tabular}
    \end{equation}
\end{Construction}

\begin{Construction}
    We construct the counit $\widetilde{e}: M \, \Box_B \, M^\lor \to A$ via considering \[M \, \Box \, M^\lor \xrightarrow{i^A 1 1} A \, \Box \, M \, \Box \, M^\lor \xrightarrow{\Delta^A 1 1} A \, \Box \, A \, \Box \, M \, \Box \, M^\lor \xrightarrow{1 l 1} A \, \Box \, M \, \Box \, M^\lor \xrightarrow{1 e} A,\] with the following $B$-balanced structure

    \begin{equation} \label{eqn:DualBimoduleCounitBalanced}
        \begin{tabular}{@{}cc@{}}
        
        \stringdiagramfigure{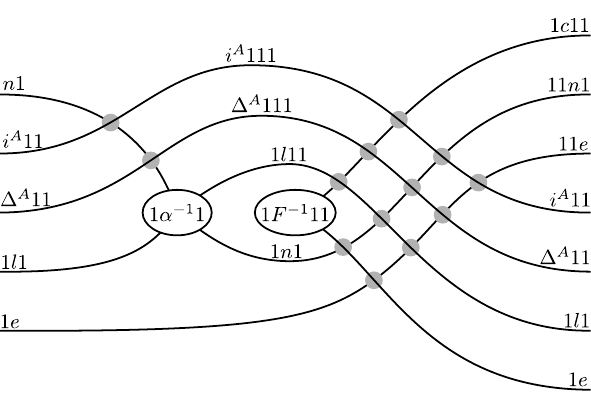} & \adjustbox{valign=c}{.}
        
        \end{tabular}
    \end{equation}
    
    Therefore, by the universal property of relative tensor product (Definition \ref{def:RelativeTensorProduct}), we get the desired $\widetilde{e}$.

    Moreover, counit $\widetilde{e}$ inherits an $(A,A)$-bilinear 1-morphism structure from those on \[M \, \Box \, M^\lor \xrightarrow{i^A 1 1} A \, \Box \, M \, \Box \, M^\lor \xrightarrow{\Delta^A 1 1} A \, \Box \, A \, \Box \, M \, \Box \, M^\lor \xrightarrow{1 l 1} A \, \Box \, M \, \Box \, M^\lor  \xrightarrow{1 e} A,\] defined via

    \begin{equation} \label{eqn:DualBimoduleCounitLeftAction}
        \begin{tabular}{@{}c@{}}

        \stringdiagramfigure{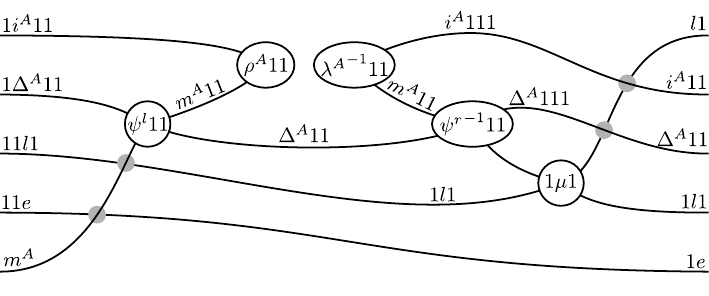}

        \end{tabular}
    \end{equation}

    \noindent and\hfill\null

    \begin{equation} \label{eqn:DualBimoduleCounitRightAction}
        \begin{tabular}{@{}cc@{}}

        \stringdiagramfigure{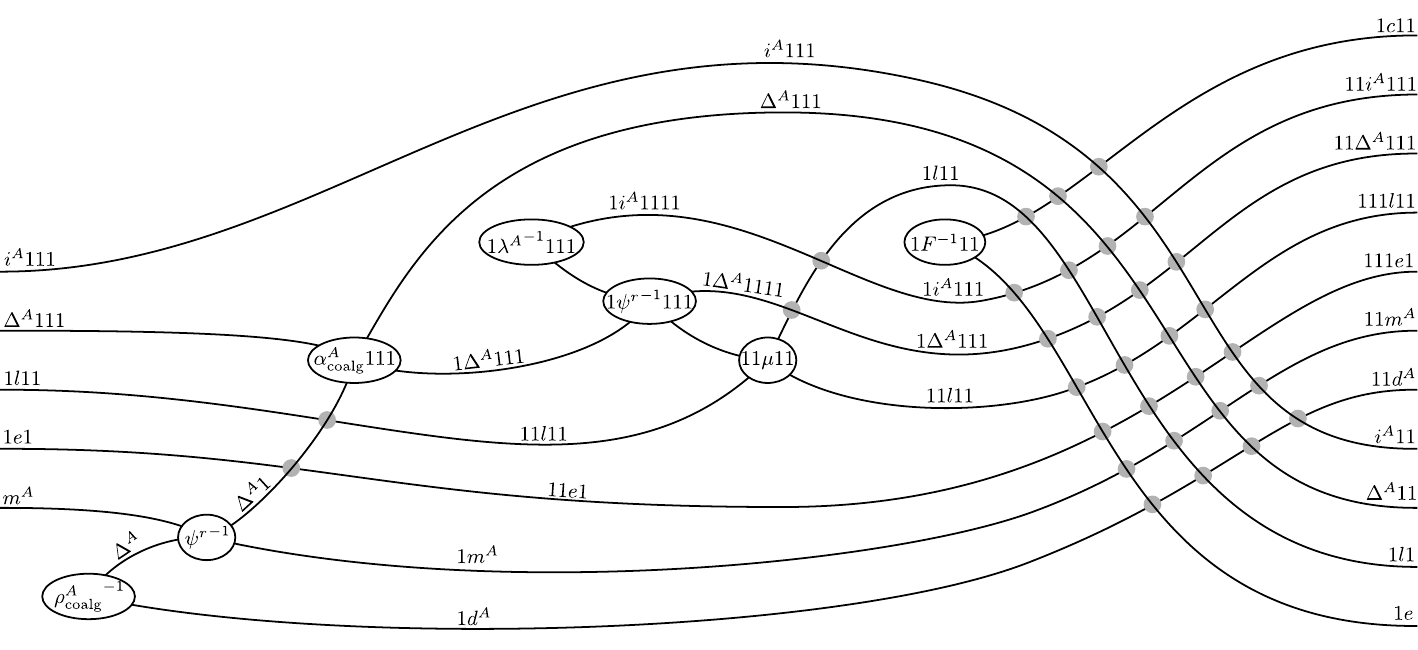} & \adjustbox{valign=c}{.}

        \end{tabular}
    \end{equation}
\end{Construction}

\subsection{Duality Coherence 2-Isomorphisms} 
All data constructed above (the left $B$-action, right $A$-action, bimodule associator on $M^\lor$, and the unit $\widetilde{c}$ and counit $\widetilde{e}$) only require the \emph{Frobenius} algebra structures on $A$ and $B$ (Definition~\ref{def:FrobeniusAlgebra}). However, the zigzag 2-isomorphisms $\widetilde{E}$ and $\widetilde{F}$ require the \emph{special} Frobenius structures (Definition~\ref{def:SpecialFrobeniusAlgebra}). This is the categorification of the analogous 1-categorical phenomenon: to lift duality data for a bimodule over Frobenius algebras, one needs a special Frobenius section on the appropriate side to construct the inverses of the zigzag 2-isomorphisms.

\begin{Construction}
    We construct the 2-isomorphism $\widetilde{E}$ by descending a balanced 2-cell from the absolute tensor products. Consider the following diagram:
    \begin{center}
    \resizebox{\textwidth}{!}{\ensuremath{\begin{tikzcd}[ampersand replacement=\&]
        {M^\vee}
            \arrow[d,"\pmb{l}^{-1}_{M^\vee}"']
        \& {}
        \& {B \, \Box \, M^\vee}
            \arrow[d,"c 1 1"]
            \arrow[ll,"\eqref{cstr:LeftActionOnDualBimodule}"']
            \arrow[lld,"t_{B,M^\vee}"]
        \\
        {B \, \Box_B \, M^\vee}
            \arrow[d,"\widetilde{c} \, \Box_B \, M^\vee"']
        \& {}
        \& {M^\vee \, \Box \, M \, \Box \, B \, \Box \, M^\vee}
            \arrow[d,"1 n 1"]
        \\
        {(M^\vee \, \Box_A \, M) \, \Box_B \, M^\vee}
            \arrow[d,"{\pmb{\alpha}_{M^\vee,M,M^\vee}}"']
        \& {(M^\vee \, \Box_A \, M) \, \Box \, M^\vee}
            \arrow[l,"t"']
        \& {M^\vee \, \Box \, M \, \Box \, M^\vee}
            \arrow[l,"t 1"']
            \arrow[ld,"1 t"]
            \arrow[d,"1 i^A 1 1"]
        \\
        {M^\vee \, \Box_A \, (M \, \Box_B \, M^\vee)}
            \arrow[dd,"M^\vee \, \Box_A \, \widetilde{e}"']
        \& {M^\vee \, \Box \, (M \, \Box_B \, M^\vee)}
            \arrow[l,"t"]
        \& {M^\vee \, \Box \, A \, \Box \, M \, \Box \, M^\vee}
            \arrow[d,"1 \Delta^A 1 1"]
        \\
        {}
        \& {}
        \& {M^\vee \, \Box \, A \, \Box \, A \, \Box \, M \, \Box \, M^\vee}
            \arrow[d,"1 1 l 1"]
        \\
        {M^\vee \, \Box_A \, A}
            \arrow[d,"\pmb{r}_{M^\vee}"']
        \& {}
        \& {M^\vee \, \Box \, A \, \Box \, M \, \Box \, M^\vee}
            \arrow[d,"1 1 e"]
        \\
        {M^\vee}
        \& {}
        \& {M^\vee \, \Box \, A}
            \arrow[ll,"\eqref{cstr:RightActionOnDualBimodule}"]
            \arrow[ull,"t_{M^\vee,A}"']
    \end{tikzcd}}}
    \end{center}
    where the 2-cells are filled by invertible 2-morphisms as follows:
    \begin{itemize}
        \item The two triangles and the middle pentagon are filled by the associativity and unitality constraints $\pmb{\alpha}$, $\pmb{l}$, $\pmb{r}$ for relative tensor products (Proposition \ref{prop:AssociativityAndUnitalityOfRelativeTensorProduct});
        
        \item The upper pentagon is filled by the definition of $\widetilde{c}$ and functoriality of the balanced structure on the relative tensor product $- \, \Box_B \, M^\vee$;
        
        \item The lower polygon is filled by the definition of $\widetilde{e}$ and functoriality of the balanced structure on the relative tensor product $M^\vee \, \Box_A \, -$.
    \end{itemize}

    Continue from the right-bottom sides of the above diagram, we further construct a 2-morphism whose target is the left $B$-action on $M^\vee$ constructed in Construction \ref{cstr:LeftActionOnDualBimodule}:

    \begin{equation} \label{eqn:LiftZigZagE}
        \begin{tabular}{@{}cc@{}}
        \stringdiagramfigure{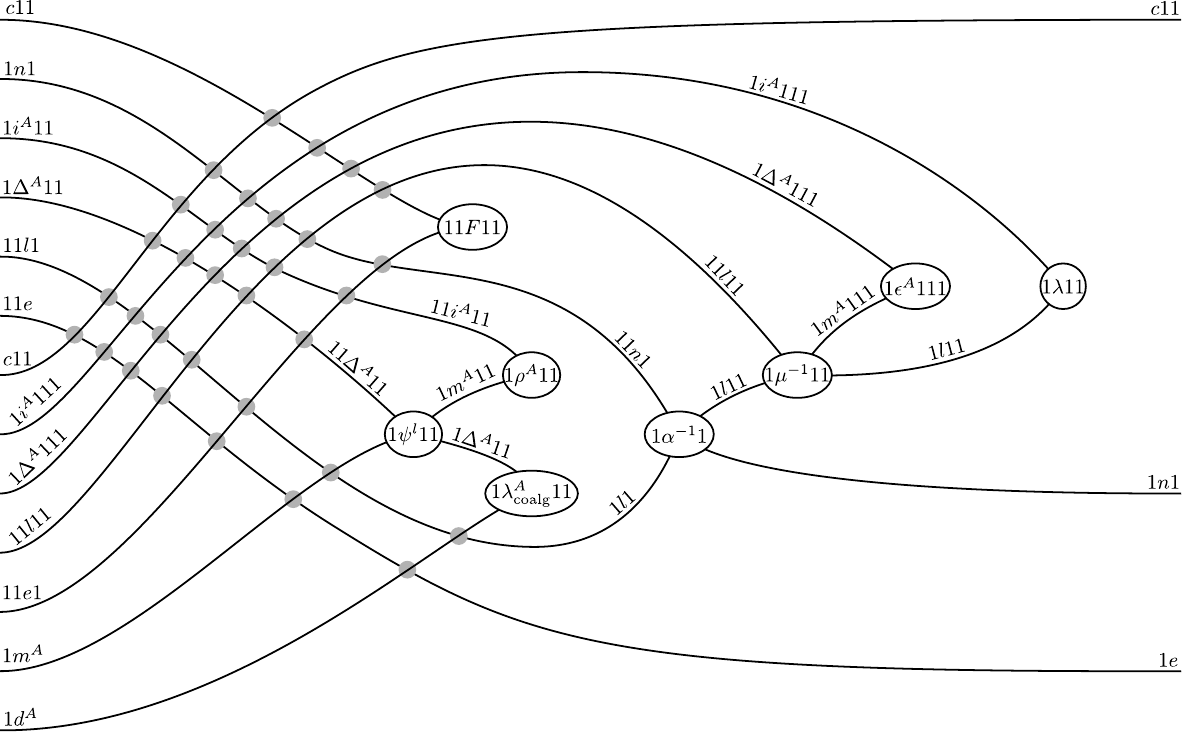} & \adjustbox{valign=c}{$.$}
        \end{tabular}
    \end{equation}

    To construct the inverse $\widetilde{E}^{-1}$, we can simply reverse the 2-cells in the above diagrams \emph{except} the one $\epsilon^A$, which should be replaced by its section $\gamma^A$ (see Definition \ref{def:SpecialFrobeniusAlgebra}). Although the 2-morphisms constructed in this way are not invertible, they are both $B$-balanced (equation \eqref{eqn:Balanced2Mor}) and their descendants become inverses of each other.

\end{Construction}

\begin{Construction}
    Similarly, we construct the 2-isomorphism $\widetilde{F}$ by descending a balanced 2-cell from the absolute tensor products. Consider the following diagram:
    \[\begin{tikzcd}
        {M}
            \arrow[d,"\pmb{r}^{-1}_M"']
        & {}
        & {M \, \Box \, B}
            \arrow[d,"1 c 1"]
            \arrow[ll,"n"']
            \arrow[lld,"t_{M,B}"]
        \\ {M \, \Box_B \, B}
            \arrow[d,"M \, \Box_B \, \widetilde{c}"']
        & {}
        & {M \, \Box \, M^\vee \, \Box \, M \, \Box \, B}
            \arrow[d,"1 1 n"]
        \\ {M \, \Box_B \, (M^\vee \Box_A M)}
            \arrow[d,"\pmb{\alpha}^{-1}_{M,M^\vee,M}"']
        & {M \, \Box \, (M^\vee \Box_A M)}
            \arrow[l,"t"']
        & {M \, \Box \, M^\vee \, \Box \, M}
            \arrow[l,"1t"']
            \arrow[ld,"t 1"]
            \arrow[d,"i^A 1 1 1"]
        \\ {(M \, \Box_B \, M^\vee) \, \Box_A \, M}
            \arrow[dd,"\widetilde{e} \, \Box_A \, M"']
        & {(M \, \Box_B \, M^\vee) \, \Box \, M}
            \arrow[l,"t"]
        & {A \, \Box \, M \, \Box \, M^\vee \, \Box \, M}
            \arrow[d,"\Delta^A 1 1 1"]
        \\ {}
        & {}
        & {A \, \Box \, A \, \Box \, M \, \Box \, M^\vee \, \Box \, M}
            \arrow[d,"1 l 1 1"]
        \\ {A \, \Box_A \, M}
            \arrow[d,"\pmb{l}_M"']
        & {}
        & {A \, \Box \, M \, \Box \, M^\vee \, \Box \, M}
            \arrow[d,"1 e 1"]
        \\ {M}
        & {}
        & {A \, \Box \, M}
            \arrow[ll,"l"]
            \arrow[ull,"t_{A,M}"']
    \end{tikzcd} \]
    where the 2-cells are filled by invertible 2-morphisms as follows:
    \begin{itemize}
        \item The two triangles and the middle pentagon are filled by the associativity and unitality constraints $\pmb{\alpha}$, $\pmb{l}$, $\pmb{r}$ for relative tensor products (Proposition \ref{prop:AssociativityAndUnitalityOfRelativeTensorProduct});
        
        \item The upper pentagon is filled by the definition of $\widetilde{c}$ and functoriality of the balanced structure on the relative tensor product $M \, \Box_B \, -$;
        
        \item The lower polygon is filled by the definition of $\widetilde{e}$ and functoriality of the balanced structure on the relative tensor product $- \, \Box_A \, M$.
    \end{itemize}

    Continue from the right-bottom sides of the above diagram, we further construct a 2-morphism whose target is $n \colon M \, \Box \, B \to M$:
    \begin{equation} \label{eqn:LiftZigZagF}
        \begin{tabular}{@{}cc@{}}
        \stringdiagramfigure{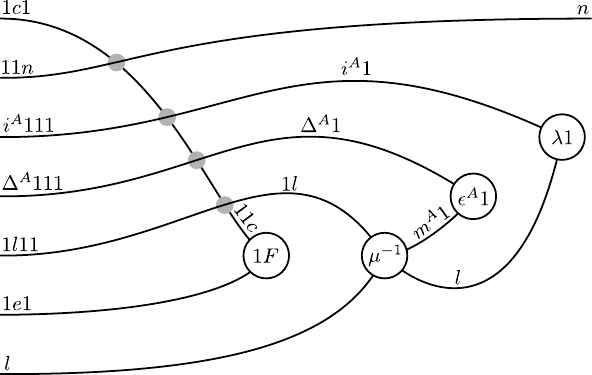} & \adjustbox{valign=c}{$.$}
        \end{tabular}
    \end{equation}

    The inverse $\widetilde{F}^{-1}$ is similarly constructed by reversing the 2-cells in the above diagrams \emph{except} $\epsilon^A$, which is replaced by its section $\gamma^A$.
\end{Construction}

\bibliography{bibliography.bib}

\end{document}